\documentclass[11pt,letterpaper]{article}
\usepackage[T1]{fontenc}
\usepackage{lmodern}
\usepackage{amsmath,amssymb,amsthm,mathtools}
\usepackage[margin=1in]{geometry}
\usepackage{microtype,titlesec}
\usepackage[colorlinks=true,linkcolor=black,citecolor=black,urlcolor=black]{hyperref}
\hypersetup{pdfauthor={Jingchen Hu, Heming Jiao},pdftitle={A Liouville Theorem for the Real k-Hessian Equation with a One-Sided Growth Condition},pdfsubject={Interior Hessian estimates, quadratic lower growth, and Liouville rigidity}}
\newtheoremstyle{plainupright}{7pt}{7pt}{\normalfont}{}{\bfseries}{.}{.5em}{}
\theoremstyle{plainupright}
\newtheorem{theorem}{Theorem}[section]
\newtheorem{lemma}[theorem]{Lemma}
\newtheorem{proposition}[theorem]{Proposition}

\newtheorem{remark}[theorem]{Remark}
\numberwithin{equation}{section}
\newcommand{\R}{\mathbb R}
\newcommand{\G}{\Gamma}
\newcommand{\clG}{\overline{\Gamma}}
\newcommand{\eps}{\varepsilon}
\newcommand{\tr}{\operatorname{tr}}
\newcommand{\diam}{\operatorname{diam}}
\newcommand{\dist}{\operatorname{dist}}
\title{A Liouville Theorem\\
for the Real $k$-Hessian Equation\\
with a One-Sided Growth Condition}
\author{Jingchen Hu \and Heming Jiao}
\date{October 25, 2026}
\begin{document}
\maketitle
\setlength{\parskip}{2pt}
\setlength{\abovedisplayskip}{6pt plus 2pt minus 1pt}
\setlength{\belowdisplayskip}{6pt plus 2pt minus 1pt}
\setlength{\abovedisplayshortskip}{4pt plus 1pt}
\setlength{\belowdisplayshortskip}{5pt plus 1pt}

\begin{abstract}
We establish a scale-invariant interior $C^2$ estimate for smooth solutions of the real $k$-Hessian equation. As an application, we prove that every smooth entire solution of $\sigma_k(D^2u)=1$ satisfying $u(x)\ge b|x|^2-B$ for some constants $b,B$ with $b>0$ is a quadratic polynomial. For smooth solutions, admissibility is forced by the growth condition.

\smallskip The results also hold for viscosity solutions and for weak solutions in Trudinger's sense. Admissibility is required in the latter case.
\end{abstract}

\section{Introduction}

We study the rigidity of entire solutions to the real
$k$-Hessian equation
\begin{equation}\label{eq:pde}
\sigma_k(D^2u)=1\qquad\text{in }\R^n,\qquad 2\le k\le n.
\end{equation}
Here $\sigma_j$ denotes the $j$-th elementary symmetric polynomial,
evaluated at the eigenvalues of the real Hessian $D^2u$.
We use $\sigma_0=1$ and $\sigma_j=0$ when $j$ exceeds the number of variables.

The operators $\sigma_k$ connect the Laplacian and the real
Monge--Amp\`ere operator. Their elliptic branches are described by
the cones
\begin{equation}\label{eq:cone}
\G_j^{(m)}=\{\lambda\in\R^m:\sigma_\ell(\lambda)>0
\text{ for }1\le\ell\le j\},\qquad 1\le j\le m.
\end{equation}
We write $H\in\G_j$ when the eigenvalues of a symmetric matrix $H$
belong to this cone. For smooth solutions of \eqref{eq:pde}, we use
$k$-admissible to mean $D^2u\in\G_k$ at every point.

The weak admissibility conventions are specified in
Sections~\ref{sec:viscosity-formulation} and~\ref{sec:ibp-formulation}.

A natural hypothesis is positive quadratic lower growth:
\begin{equation}\label{eq:lower}
u(x)\ge b|x|^2-B\qquad\text{for some constants }b>0\text{ and }B\ge0.
\end{equation}
This condition makes large sublevel sets bounded. It does not directly
bound the oscillation of a rescaled solution on a fixed Euclidean ball,
which is needed in the usual interior-estimate argument.

\subsection{Earlier estimates and rigidity theorems}

Liouville theorems for nonlinear partial differential equations have been
studied extensively. For Hessian equations, a central question is quadratic
rigidity under growth or convexity assumptions.

Bao, Chen, Guan, and Ji proved quadratic rigidity for strictly convex
entire solutions under \eqref{eq:lower} \cite[Theorem 3.2]{BCGJ}.
For $2\le k\le n-1$, Li, Ren, and Wang extended this result to
$(k+1)$-admissible solutions using a Pogorelov estimate \cite{LRW}.

Chu and Dinew treated a general class of Hessian equations through
weighted gradient and Hessian estimates. For $k$-admissible solutions
of \eqref{eq:pde}, their result applies under \eqref{eq:lower} and the additional bound
$\sigma_{k+1}(D^2u)\ge-A$ \cite[Corollary 1.3]{CD}.

Within the natural class of $k$-admissible solutions, Wang and Bao
proved rigidity under positive two-sided quadratic growth
\cite[Theorem 1.1]{WB}. Their proof combines interior estimates with
rescaling and H\"older continuity of the Hessian.

Chen and Xiang proved that every $2$-admissible solution
$u\in C^4(\R^3)$ of $\sigma_2(D^2u)=1$ satisfying \eqref{eq:lower}
is a quadratic polynomial \cite[Theorem 1.4]{CX}.

Bin Wang proved a lower-growth rigidity theorem for $k$-admissible
solutions when $2k>n$ \cite[Theorem 1.2]{BW}. His argument establishes
a Pogorelov estimate independent of the gradient bound.

For the complex Monge--Amp\`ere equation $\det(v_{i\bar j})=1$,
Chu and Liu proved quadratic rigidity for smooth entire plurisubharmonic
solutions under positive two-sided quadratic growth \cite[Theorem 1.2]{CL}.
At the same time, using a different method, Hu and Sheng established
interior $C^2$ estimates and proved quadratic
rigidity for locally bounded entire plurisubharmonic Bedford--Taylor solutions
under weak quadratic growth conditions \cite[Theorems 1.1 and 1.3]{HS}.
In particular, the Liouville theorem in that paper permits a positive
quadratic lower bound together with an upper bound of order
$(1+|z|^2)\log(e+|z|^2)$. We expect that this upper growth bound
can be further weakened.

Motivated by these developments, particularly the recent work of Wang
\cite{BW}, we ask whether the upper growth bound can be removed entirely
for the real $k$-Hessian equation, so that \eqref{eq:lower} alone implies
quadratic rigidity.

\subsection{The main results}

We first state the Liouville theorem, followed by the interior estimate
used in its proof.

\begin{theorem}[Liouville theorem]\label{thm:main}
Let $2\le k\le n$, and suppose $u\in C^\infty(\R^n)$ satisfies
\begin{equation}\label{eq:main}
\sigma_k(D^2u)=1\qquad\text{in }\R^n.
\end{equation}
Suppose there exist constants $b>0$ and $B\ge0$ such that
\begin{equation}\label{eq:liouville-growth}
u(x)\ge b|x|^2-B.
\end{equation}
Then
\begin{equation}\label{eq:conclusion}
u(x)=\tfrac12x^TAx+\ell\cdot x+c,
\qquad\sigma_k(A)=1,\qquad A\ge2bI,
\end{equation}
where $A$ is a constant symmetric matrix, $\ell\in\R^n$, and $c\in\R$.
\end{theorem}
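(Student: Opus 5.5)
The proof will go through a blow-down argument driven by the scale-invariant interior $C^2$ estimate announced in the abstract. I assume it takes the following form. If $u$ is a smooth $k$-admissible solution of $\sigma_k(D^2u)=1$ in a bounded convex sublevel-type domain $\Omega=\{u<h\}$, then $|D^2u|$ at points of $\{u<h/2\}$ is bounded by a constant depending only on $n,k$ and on the geometry of $\Omega$ normalized by $h$. Concretely, after affine normalization the sublevel set should be comparable to a ball. This is why the lower growth bound alone should suffice: the sublevel sets are controlled even though the oscillation on Euclidean balls is not.

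\textbf{Step 1: admissibility.} Since $u\ge b|x|^2-B$, $u$ attains a minimum. At the minimum point $D^2u\ge0$ and $\sigma_k(D^2u)=1$, so $D^2u\in\G_k$ there. The set of points where $D^2u\in\G_k$ is open and closed inside the connected set $\{\sigma_k=1\}$, because $\{\sigma_k=1\}$ has $\G_k$ as one of its connected components. Hence $D^2u\in\G_k$ everywhere, and in particular $\Delta u>0$.

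\textbf{Step 2: sublevel sets and rescaling.} For $R\ge1$, set $u_R(y)=R^{-2}u(Ry)$. Then $\sigma_k(D^2u_R)=1$ and $u_R(y)\ge b|y|^2-BR^{-2}$. Subharmonicity gives an upper bound of the form $u_R\le\sup_{\partial B_r}u_R$, but this is useless without upper growth. The key point is instead that the sublevel set $\Omega_R=\{u_R<1\}$ lies in $B_{C}$ with $C\approx b^{-1/2}$. We also need $\Omega_R$ to contain a ball of radius bounded below uniformly in $R$. I would obtain this from $\sigma_k(D^2u_R)=1$, which gives $\Delta u_R\ge c_{n,k}>0$, combined with a comparison/ABP-type argument on the set $\{u_R<1\}$. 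On a set of diameter $\le C$ where $u_R$ ranges from $\min u_R\le u_R(0)$ up to $1$, the gradient and the inner radius should be controllable by the interior estimate's hypotheses. The hardest part is that $u_R(0)=R^{-2}u(0)\to0$ while $u_R$ might rise steeply. I would handle this by applying the $C^2$ estimate on sublevel sets $\{u_R<h\}$ with $h$ fixed, which is exactly the setting the paper's interior estimate is designed for. The constants depend only on $n,k,b$ and on $h$, since the domain is bounded by $b$ and $h$.

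\textbf{Step 3: uniform Hessian bound and rigidity.} The interior estimate yields $|D^2u_R|\le C$ on $\{u_R<1/2\}\supset B_{r_0}$ for some $r_0>0$ independent of $R$. Here $r_0$ comes from an upper bound on $u_R$ near $0$, which the Hessian bound on a smaller sublevel set gives iteratively. Rescaling back gives $|D^2u|\le C$ on $B_{r_0R}$ for every $R$, so $D^2u$ is globally bounded. The equation is then uniformly elliptic. By concavity of $\sigma_k^{1/k}$ on $\G_k$, Evans--Krylov gives $[D^2u_R]_{C^\alpha(B_{r_0/2})}\le C$. Scaling back, $[D^2u]_{C^\alpha(B_{r_0R/2})}\le CR^{-\alpha}\to0$, so $D^2u\equiv A$ is constant and $u$ is a quadratic polynomial with $\sigma_k(A)=1$.

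Finally, $u(x)-b|x|^2$ is a quadratic polynomial bounded below, so $\tfrac12A-bI\ge0$, that is, $A\ge2bI$.

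I expect the main obstacle to be Step 2. The growth condition must be converted into the normalized domain hypothesis of the interior estimate, namely a two-sided control of the sublevel set $\{u_R<h\}$ uniform in $R$. In particular, a lower bound on its inner radius is needed in the absence of any upper growth bound.
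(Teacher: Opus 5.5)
\textbf{Where the proposal stands.} Your outer structure is exactly the paper's Section~\ref{sec:liouville}: admissibility from an interior minimum plus continuity, a Hessian bound on $B_{\rho R}$ uniform in $R$, Evans--Krylov on the rescalings giving $[D^2u]_{C^\zeta(B_{R/2})}\le CR^{-\zeta}$, and $A\ge2bI$ from the growth. (A small correction to Step 1: the matrix level set $\{\sigma_k=1\}$ is not connected. What you need, as in Section~\ref{sec:branch}, is that $\G_k\cap\{\sigma_k=1\}$ is relatively closed in it, by the Maclaurin bounds $\sigma_j\ge\binom nj\binom nk^{-j/k}$.)

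\textbf{The gap is Step 2.} This step is the whole content of the interior estimate, and it is not established. You need the sublevel component of $\{u_R<t\}$ through the origin to contain a ball of radius $\gamma(n,k,b)$. You also need a gradient bound to feed any Pogorelov-type estimate. Neither is proved. The ``iterative'' origin of $r_0$ in Step 3 is circular, because a Hessian bound on a smaller sublevel set already presupposes its inner radius. The tools you name cannot supply these bounds:
\begin{itemize}
\item The inequality $\Delta u_R\ge c_{n,k}$ is the subsolution side of the equation and only bounds $u$ from above by its boundary values. It is compatible with arbitrarily thin sublevel sets. For example, $Mx_1^2+\tfrac c2|x|^2$ has $\Delta\ge c$ and positive quadratic lower growth, yet its sublevel sets have width $O(M^{-1/2})$.
\item ABP, with $\det D^2U\le C$ on the lower contact set by Maclaurin, yields only $t\le -U(0)\le C D_t|\Omega_t|^{1/n}$. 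This is a volume bound and does not keep $0$ away from $\partial\Omega_t$.
\item Aleksandrov's distance estimate would need convexity, but for $k<n$ the sublevel sets are only $(k-1)$-convex. Affine normalization is also unavailable, since $\sigma_k$ is not $SL(n)$-invariant for $k<n$.
\end{itemize}

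\textbf{The missing idea.} You need a boundary decay estimate with constants independent of the boundary curvature. It must use the other side of the equation, $\sigma_k(D^2U)\le1$, tested against a strict subsolution that is nonpositive on $\partial\Omega$. This is Proposition~\ref{prop:boundary}: on a bounded smooth domain with $(k-1)$-convex boundary, the admissible solution of $\sigma_k(D^2U)=1$, $U|_{\partial\Omega}=0$, satisfies $-U\le C_0D^{2-\beta}d^\beta$.
\begin{itemize}
\item The barrier is $K(-d^\alpha)+\eps(|x-x_0|^2-D^2)$ with $\alpha=\alpha(n,k)$. Its admissibility despite possibly negative principal curvatures is Lemma~\ref{lem:transform}. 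The cut locus is handled by semiconvexity and mollification.
\item Applied to $U=u-t$ on a regular component $\Omega_t$ with $t\in(b/4,b/2)$ and $D_t\le\sqrt3$, the estimate gives $d_t(0)\ge\gamma$.
\item It also separates each set $\{-U\ge\eta/4\}$ from $\partial\Omega_t$. This is what makes the Chou--Wang interior gradient estimate, and then their weighted Hessian estimate, applicable (Lemma~\ref{lem:domain-interior}).
\end{itemize}
The resulting Theorem~\ref{thm:interior} takes the growth condition on $B_R$ directly as its hypothesis. With it, your Steps~2--3 reduce to setting $v=u-u(0)$ and $\widetilde B=B+u(0)$, then applying the theorem for every $R$ with $\widetilde B\le bR^2/4$.
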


The following estimate is essential to the proof of the Liouville theorem.
Its constants depend only on the dimension, the order of the equation,
and the coefficient of the quadratic lower bound.

\begin{theorem}[Interior $C^2$ estimate]\label{thm:interior}
Let $2\le k\le n$ and $b>0$. There exist constants
$\rho=\rho(n,k,b)\in(0,1)$ and $C=C(n,k,b)>0$ with the following property.

For $R>0$, suppose $u\in C^\infty(B_R)$ satisfies
\begin{equation}\label{eq:local-equation}
\sigma_k(D^2u)=1\quad\text{in }B_R,\qquad u(0)=0,
\end{equation}
and, for some constant $B$,
\begin{equation}\label{eq:local-growth}
u(x)\ge b|x|^2-B\quad(x\in B_R),\qquad
0\le B\le\frac{bR^2}{4}.
\end{equation}
Then
\begin{equation}\label{eq:interior-estimate}
\sup_{B_{\rho R}}\left(\frac{|u|}{R^2}+
\frac{|Du|}{R}+|D^2u|\right)\le C.
\end{equation}
\end{theorem}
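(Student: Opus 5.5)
By scaling we reduce to $R=1$. Setting $u_R(y)=R^{-2}u(Ry)$ preserves the equation $\sigma_k(D^2u_R)=1$, the normalization $u_R(0)=0$, and the growth condition with the same $b$ and $B/R^2\le b/4$. We therefore assume $R=1$, $u(0)=0$, $u\ge b|x|^2-B$ on $B_1$, with $0\le B\le b/4$. The key object is the section $\Omega=\{x\in B_1: u(x)<b/2\}$. On $\partial B_1$ we have $u\ge b-B\ge 3b/4>b/2$. Hence the component $\Omega_0$ of $\Omega$ containing $0$ is compactly contained in $B_1$ and lies inside $\{|x|^2<(b/2+B)/b\}\subset B_{\sqrt{3}/2}$. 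Moreover $u=b/2$ on $\partial\Omega_0$. This gives a Dirichlet-type setting on a domain of controlled size, with boundary data exactly constant.

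\textbf{Step 1: admissibility and the $C^0$ bound.} First we check $D^2u\in\Gamma_k$ in $\Omega_0$. Since $\sigma_k=1\neq0$, the eigenvalue vector stays in one connected component of $\{\sigma_k>0\}$, and $B_1$ is connected. At an interior minimum point of $u$ on $\overline{\Omega_0}$ (which exists since $u(0)=0<b/2$ on the boundary), $D^2u\ge0$. Hence that component is $\Gamma_k$, which forces admissibility. Admissibility gives subharmonicity, $\Delta u>0$. Using $\sigma_k\le C\sigma_1^k$, we get $\Delta u\ge c(n,k)>0$. Thus $u\le b/2$ on $\Omega_0$, and $u\ge -B\ge -b/4$, so $|u|\le C$ on $\Omega_0$.

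Next we need $\Omega_0$ to contain a ball $B_{r_0}$ with $r_0=r_0(n,k,b)$. The plan is to get an upper bound $u(x)\le C|x|$ near $0$. We would try first a barrier or comparison argument: compare $u$ on $B_{1}$ with the $k$-convex solution of $\sigma_k=1$ having the same boundary values. A more robust route is the interior gradient estimate for $k$-admissible functions (Chou--Wang, Trudinger), $|Du(x)|\le C\,\mathrm{osc}\,u/\dist(x,\partial\Omega)$, applied on a slightly smaller section $\{u<b/2-\delta\}$. Since $u(0)=0$ sits strictly below level $b/2$ and $|u|$ is bounded, the gradient bound on $\{u<b/4\}$ gives $r_0$ such that $B_{r_0}\subset\{u<b/4\}$. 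To get this we need a positive lower bound for the distance from $\{u<b/4\}$ to $\partial\Omega_0$, and we obtain it by applying the gradient estimate on all of $B_{1}$ instead. The problem is that the oscillation of $u$ on $B_1$ is not a priori controlled from above. I expect this to be the main obstacle, and it is where the one-sided condition bites.

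\textbf{Step 2: Pogorelov estimate on the section (the main step).} Here we use the Pogorelov-type estimate without gradient dependence, in the spirit of Wang's result~\cite{BW}, combined with a test function $w=(b/2-u)^{\beta}\,\lambda_{\max}(D^2u)\,e^{\varphi(|Du|^2)}$ on $\Omega_0$. It vanishes on $\partial\Omega_0$ because $u=b/2$ there. The gradient on $\Omega_0$ should be controlled by convexity-type arguments: for $k$-admissible $u$, the gradient at $x$ is bounded by $\mathrm{osc}\,u/\dist(x,\partial B_1)$. We would derive this by comparing with the one-dimensional restrictions, which are subharmonic in the sense of sub-$1$-admissibility only after averaging. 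Alternatively, in the maximum principle computation we use $|Du|$ only through the concavity of $\sigma_k^{1/k}$ and the Lin--Trudinger inequalities, as in \cite{BW,CD}. The required step is the estimate $(b/2-u)^{\beta}|D^2u|\le C$ on $\Omega_0$ with $C=C(n,k,b)$. For $k<n/2$ the standard Pogorelov estimate fails for $\sigma_k$ in general, so the argument must exploit the constant boundary data and the known $C^0$ bound. The plan is to use the term $-\sigma_k^{ij}u_{ij}=-k$ from the factor $(b/2-u)^\beta$ to absorb third-order terms, following the Li--Ren--Wang concavity lemmas.

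\textbf{Step 3: conclusion.} On $B_{r_0}\subset\{u<b/4\}$ we have $b/2-u\ge b/4$. This yields $|D^2u|\le C$ there, and integrating from $0$ using the bound on $|u|$ gives $|Du|\le C$. Choosing $\rho=r_0$ and scaling back to general $R$ gives \eqref{eq:interior-estimate}.
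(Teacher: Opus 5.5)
Your scaling to $R=1$, the choice of a sublevel component of $\{u<b/2\}$ lying in $B_{\sqrt3/2}$, and the admissibility argument through an interior minimum are fine and agree with the paper. The gap is the step you flag and then leave open: you never produce a ball $B_{r_0}\subset\Omega_0$ with $r_0=r_0(n,k,b)$, nor any gradient bound, and the route you sketch is circular.

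An interior gradient estimate of Chou--Wang type, $|Du(x)|\le C\operatorname{osc}_{B_r(x)}u/r$, needs one of two inputs. Either a ball of controlled radius about $x$ lies inside $\Omega_0$, which is what you are trying to prove, or $\sup u$ is controlled on a fixed Euclidean ball, which the one-sided growth condition does not give. Your proposed bound $|Du(x)|\le\operatorname{osc}u/\dist(x,\partial B_1)$ has the same defect. It also cannot come from one-dimensional restrictions, since $k$-admissible functions are not convex along lines when $k<n$.

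Step~2 does not rescue this. A bound $(b/2-u)^\beta|D^2u|\le C(n,k,b)$, with no dependence on $|Du|$ or on the geometry of $\partial\Omega_0$, would indeed imply the theorem: a Taylor argument along rays from $0$ then gives both the ball and the gradient bound. But that bound is the whole difficulty. Wang's gradient-free Pogorelov estimate needs $2k>n$, and Li--Ren--Wang need $(k+1)$-convexity. Your test function also contains $e^{\varphi(|Du|^2)}$, and choosing $\varphi$ requires a gradient bound on all of $\Omega_0$, including near $\partial\Omega_0$, where nothing controls $|Du|$.

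The missing idea is a boundary decay estimate for the zero-boundary problem that does not depend on the curvature or smoothness of the boundary.

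\begin{itemize}
\item \textbf{The estimate.} In the paper, if $U$ is admissible, $\sigma_k(D^2U)=1$, and $U=0$ on the boundary of a domain of diameter $D$ with $(k-1)$-convex boundary, then $-U\le C_0D^{2-\beta}d^\beta$ with $C_0=C_0(n,k)$ (Proposition~\ref{prop:boundary}).
\item \textbf{Its proof.} It follows by comparison with $K(-d^\alpha)+\eps(|x-x_0|^2-D^2)$. Admissibility of $-d^\alpha$ comes from the algebraic Lemma~\ref{lem:transform}, and semiconvexity plus mollification handles the cut locus.
\item \textbf{The ball.} Apply it to $U=u-t$ on the component of $\{u<t\}$ containing $0$. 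Take $t\in(b/4,b/2)$ a \emph{regular} value, rather than the level $b/2$ itself, so the boundary is smooth and, by \eqref{eq:restriction}, $(k-1)$-convex. Since $-U(0)=t>b/4$, this gives $d_t(0)\ge\gamma(n,k,b)$, which is the ball you need.
\item \textbf{The gradient and Hessian.} The same estimate keeps each deeper sublevel $\{-U\ge\eta/4\}$ at distance $r_\eta$ from the boundary, so the Chou--Wang gradient estimate applies there with oscillation at most $C_0$. The Chou--Wang Pogorelov estimate, with its gradient dependence, can then be used on a regular inner sublevel $\{U<-s\}$, where the $C^1$ norm is now controlled.
\end{itemize}

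Without a curvature-free barrier of this kind, neither your Step~1 nor your Step~2 closes.
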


The Liouville theorem and the interior estimate extend to viscosity
solutions, as stated in the following two theorems.

\begin{theorem}[Liouville theorem for viscosity solutions]\label{thm:viscosity-liouville}
Let $2\le k\le n$, and let $u\in C(\R^n)$ be a viscosity solution,
in the sense of Section~\ref{sec:viscosity-formulation}, of
\begin{equation}\label{eq:viscosity-entire-equation}
\sigma_k(D^2u)=1\qquad\text{in }\R^n.
\end{equation}
Suppose that, for some constants $b>0$ and $B\ge0$,
\begin{equation}\label{eq:viscosity-growth}
u(x)\ge b|x|^2-B.
\end{equation}
Then
\begin{equation}\label{eq:viscosity-conclusion}
u(x)=\tfrac12x^TAx+\ell\cdot x+c,
\qquad\sigma_k(A)=1,\qquad A\ge2bI,
\end{equation}
where $A$ is a constant symmetric matrix, $\ell\in\R^n$, and $c\in\R$.
\end{theorem}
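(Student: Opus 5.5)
The plan is to reduce Theorem~\ref{thm:viscosity-liouville} to the smooth case, Theorems~\ref{thm:main} and~\ref{thm:interior}, through approximation by smooth solutions of Dirichlet problems on large balls. Uniqueness and comparison for viscosity solutions of $\sigma_k(D^2u)=1$ identify the limit, and the interior estimate passes to the limit because it is scale invariant with constants depending only on $(n,k,b)$.

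\textbf{Step 1: smooth approximants.} Fix $R>0$ large. Mollify $u$ to obtain $u_\eps$, and use standard convergence. Let $\Omega_R$ be a smooth uniformly convex domain, say the ball $B_{2R}$. Following Caffarelli--Nirenberg--Spruck, solve the Dirichlet problem
\[
\sigma_k(D^2v_{R,\eps})=1 \text{ in } B_{2R},\qquad v_{R,\eps}=u_\eps \text{ on } \partial B_{2R},
\]
with a smooth $k$-admissible solution. Comparison for viscosity solutions in the sense of Section~\ref{sec:viscosity-formulation}, with the admissibility convention used there, gives $v_{R,\eps}\to u$ locally uniformly in $B_{2R}$ as $\eps\to0$.

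\textbf{Step 2: the lower bound for the approximants.} Set $w(x)=b|x|^2-B$. Since $\sigma_k(2bI)$ may be less than or greater than $1$, I will not use $w$ directly as a subsolution. Instead, note that $u\ge w$ and $u$ is a viscosity solution, so $u$ is $k$-subharmonic in the viscosity sense. For the approximants, I only need a lower bound on a sublevel scale. Write $u\ge w$ on $\partial B_{2R}$. Since $v$ is $k$-admissible and therefore subharmonic, I can only bound $v$ from above by the harmonic extension, which is the wrong direction.

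Better: pass the lower bound through the limit without needing it for $v$. Apply Theorem~\ref{thm:interior} to $u$ itself, after normalizing $u(0)=0$ by translating so that $0$ is a minimum point of $u$. The growth condition keeps the minimum finite, and after translation $u(x)\ge b|x|^2-B'$ holds with a modified $B'$, since $b|x+x_0|^2\ge \tfrac b2|x|^2-b|x_0|^2$. This costs a factor of $b$, so I use $b/2$. To do this I need Theorem~\ref{thm:interior} for viscosity solutions. I get it by approximation. On $B_R$, solve the Dirichlet problem with boundary data $u_\eps$. By comparison, $v_\eps\to u$ uniformly on $\overline{B_R}$. For $\eps$ small, $v_\eps\ge \tfrac b2|x|^2-B-1$ and $|v_\eps(0)|<1$. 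Subtracting $v_\eps(0)$ changes $B$ by at most $1$, which remains $\le bR^2/8$ for $R$ large. Theorem~\ref{thm:interior} with $b/2$ then gives uniform bounds $|D^2v_\eps|\le C$ on $B_{\rho R}$. Hence $u\in C^{1,1}(B_{\rho R})$ with $|D^2u|\le C$ almost everywhere, independent of $R$.

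\textbf{Step 3: regularity and conclusion.} Since $D^2v_\eps$ is uniformly bounded, the equation is uniformly elliptic along the family. The operator $\sigma_k^{1/k}$ is concave on $\Gamma_k$, so Evans--Krylov gives uniform $C^{2,\alpha}$ bounds on $B_{\rho R/2}$. Passing to the limit yields $u\in C^{2,\alpha}$, hence $u$ is smooth by Schauder bootstrapping, a classical admissible solution. Theorem~\ref{thm:main} then applies and gives \eqref{eq:viscosity-conclusion}. Alternatively, the scale-invariant bound $[D^2u]_{C^\alpha(B_{R'})}\le C R'^{-\alpha}$, with $R'\to\infty$, shows directly that $D^2u$ is constant.

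\textbf{Main obstacle.} The main difficulty is Step 1: the existence of smooth approximants and uniform convergence. This requires the comparison principle within the paper's viscosity admissibility class, and boundary data that is only continuous, hence the mollification together with CNS solvability on balls. The rest is stable under limits.
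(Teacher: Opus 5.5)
Your proposal is correct and follows essentially the paper's route. You use CNS Dirichlet approximants on balls, comparison to get uniform convergence, Theorem~\ref{thm:interior} on the normalized approximants, Evans--Krylov and Schauder to reach a smooth limit, and finally Theorem~\ref{thm:main}. The paper packages the middle steps as Theorem~\ref{thm:viscosity-interior}. There it needs the margin Lemma~\ref{lem:margin} only because $B$ may equal $bR^2/4$, whereas your choice of large $R$ with $B$ fixed gives that room for free.

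Two simplifications are available:
\begin{itemize}
\item The comparison you call the main obstacle is immediate, because the approximant is smooth up to the boundary. Perturb it by $\mp\delta(|x|^2-r^2)$ to get a strict classical super- or subsolution, and apply the test inequalities of Section~\ref{sec:viscosity-formulation} directly, as the paper does.
\item Translating to a minimum point, with the loss $b\to b/2$, is unnecessary. Subtracting $u(0)$ already gives $u-u(0)\ge b|x|^2-(B+u(0))$ with $B+u(0)\ge0$.
\end{itemize}
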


\begin{theorem}[Interior estimate for viscosity solutions]\label{thm:viscosity-interior}
Let $2\le k\le n$ and $b>0$. There exist constants
$\rho=\rho(n,k,b)\in(0,1)$ and $C=C(n,k,b)>0$ with the following property.

For $R>0$, suppose $u\in C(B_R)$ is a viscosity solution, in the sense
of Section~\ref{sec:viscosity-formulation}, of
\begin{equation}\label{eq:viscosity-local-equation}
\sigma_k(D^2u)=1\quad\text{in }B_R,\qquad u(0)=0.
\end{equation}
Assume that, for some constant $B$,
\begin{equation}\label{eq:viscosity-local-growth}
u(x)\ge b|x|^2-B\quad(x\in B_R),\qquad
0\le B\le\frac{bR^2}{4}.
\end{equation}
Then $u\in C^\infty(B_{\rho R})$ and
\begin{equation}\label{eq:viscosity-interior-estimate}
\sup_{B_{\rho R}}\left(\frac{|u|}{R^2}+
\frac{|Du|}{R}+|D^2u|\right)\le C.
\end{equation}
\end{theorem}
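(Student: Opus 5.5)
The plan is to reduce Theorem~\ref{thm:viscosity-interior} to the smooth estimate, Theorem~\ref{thm:interior}, by approximation. After rescaling $u_R(x)=R^{-2}u(Rx)$ we may take $R=1$. The hypotheses are invariant under this rescaling: the equation is preserved, $u_R(0)=0$, and $u_R(x)\ge b|x|^2-B/R^2$ with $B/R^2\le b/4$.

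\textbf{Step 1: confinement.} We pick a smooth domain on which to solve a Dirichlet problem. Since $u(0)=0$ and $u\ge b|x|^2-b/4$, the sublevel set $\{u<b/8\}$ contains $0$ and is contained in $B_{\sqrt{3/8}}\subset B_1$, so it is compactly contained in $B_1$. To obtain a convenient domain, we compare $u$ from above with a $k$-convex barrier. Let $w$ be the radial solution of $\sigma_k(D^2 w)=1$ with $w=\max_{\partial B_{r}}u$ on $\partial B_r$, for a suitable $r<1$. The comparison principle for viscosity solutions gives $u\le w$ in $B_r$. Combined with the lower bound, this gives two-sided control of $u$ on $B_r$ in terms of $\sup_{B_1}|u|$. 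That supremum is finite by continuity, but it is not controlled by $n,k,b$.

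\textbf{Step 2: approximation.} The standard route is as follows. Choose the level $t=b/8$ and a smooth, uniformly $(k-1)$-convex domain $\Omega$ squeezed between $\{u<t/2\}$ and $\{u<t\}$; a ball is a good choice. Approximate $u|_{\partial\Omega}$ by smooth boundary data $\varphi_j$. By Caffarelli--Nirenberg--Spruck, there are smooth $k$-admissible solutions $u_j$ of $\sigma_k(D^2u_j)=1$ in $\Omega$ with $u_j=\varphi_j$ on $\partial\Omega$. Uniqueness and stability of viscosity solutions of the Dirichlet problem, which follow from the comparison principle of Section~\ref{sec:viscosity-formulation}, give $u_j\to u$ uniformly on $\overline\Omega$.

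To apply Theorem~\ref{thm:interior} to $u_j$, we need a recentred quadratic lower bound. Subtract $u_j(0)\to0$ and use $u_j\ge u-\eps_j\ge b|x|^2-b/4-\eps_j$. Applied on a ball $B_{r_0}\subset\Omega$ with the correct ratio between $B$ and $r_0^2$, this is the main obstacle. The lower bound $b|x|^2-B$ must satisfy $B\le br_0^2/4$, but shrinking the radius does not improve $B$. The way around this is to compare $u$ with the Dirichlet solution on its own sublevel set $\{u<\tfrac{b}{8}\}$ itself, where $u$ equals a constant on the boundary. Perturb to a smooth nearby level-set domain by slightly mollifying $u$. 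For the resulting smooth solutions $v_j$, the quadratic lower bound holds with a possibly worse constant $b'=b'(n,k,b)$ on a ball $B_{R'}$, $R'=R'(b)$. This follows because $v_j-(\text{boundary constant})$ vanishes on $\partial\Omega_j$ and the domains are uniformly trapped between balls.

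More simply, we would check whether the proof of Theorem~\ref{thm:interior} actually only uses the lower bound and $u(0)=0$ on a sublevel set. If so, it applies verbatim to $u_j-u_j(0)$ with $b$ replaced by $b/2$ and $j$ large.

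\textbf{Step 3: conclusion.} Given uniform bounds $|D^2u_j|\le C$ on $B_{\rho}$, the equation is uniformly elliptic there. The Evans--Krylov theorem, together with concavity of $\sigma_k^{1/k}$ on $\G_k$, gives $C^{2,\alpha}$ bounds; Schauder bootstrapping then gives $C^\ell$ bounds for every $\ell$. By Arzel\`a--Ascoli, $u_j\to u$ in $C^\infty(B_\rho)$, so $u\in C^\infty(B_\rho)$ and the estimate passes to the limit. Undoing the scaling yields \eqref{eq:viscosity-interior-estimate}.
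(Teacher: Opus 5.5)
You identify the real difficulty correctly: once the approximants live on a smaller domain, the ratio hypothesis $B\le br_0^2/4$ of Theorem~\ref{thm:interior} is lost. But the proposal never overcomes it, and each mechanism you offer fails.

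\textbf{The domain.} A ball squeezed between $\{u<t/2\}$ and $\{u<t\}$ need not exist; anisotropic quadratic solutions already rule it out. More seriously, take any $\Omega\subset\{u<b/8\}\subset B_{\sqrt{3/8}}$. The bound $u_j-u_j(0)\ge b|x|^2-B-2\varepsilon_j$ only traps a sublevel set of level $t'>0$ in $B_{\sqrt{(t'+B)/b}}$. That ball has radius larger than $1/2$ when $B=b/4$, and nothing forces it to lie in $\Omega$. So your ``more simply'' route cannot run on this domain, besides being conditional. Replacing $b$ by $b/2$ worsens the ratio rather than restoring it.

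\textbf{The repair via a worse constant.} You claim that Dirichlet solutions $v_j$ on mollified level-set domains obey a quadratic lower bound with a worse constant $b'$ on some $B_{R'}$, because $v_j$ minus a constant vanishes on $\partial\Omega_j$. No argument is given, and the claim is false. Passing to the limit, it would give $u\ge\tfrac34b'R'^2>0$ on $\partial B_{R'}$. But $u=\tfrac{\lambda}{2}|x|^2+\ell\cdot x$ with $\lambda=\binom{n}{k}^{-1/k}$, $|\ell|^2=\tfrac b2(\lambda-2b)$ and $0<b<\lambda/4$ satisfies every hypothesis with $B=b/4$. It is negative somewhere on each sphere $\partial B_{R'}$ with $B_{R'}\subset\{u\le b/8\}$. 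Zero boundary data say nothing about positivity on interior spheres. Step~1 plays no role in the argument.

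\textbf{What is missing.} You need a version of the smooth estimate with slack in the growth hypothesis, used on an approximation domain large enough to exploit it.

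The paper's Lemma~\ref{lem:margin} shows that the conclusion of Theorem~\ref{thm:interior} holds on $B_{\eta R}$ under $0\le B\le bR^2/2$. One reruns Section~\ref{sec:interior} with a regular value $t\in(b/8,b/4)$. Then $(t+B)/b<3/4$ still confines the sublevel component to $B_{\sqrt3/2}$, and $t>b/8$ still yields the depth bound from Proposition~\ref{prop:boundary}.

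One then solves the Dirichlet problem on the ball $B_r$, $r=\sqrt3/2$, not on a sublevel set, with smooth data $\varphi_m\to u|_{\partial B_r}$. Compare $u$ with the strict perturbations $u_m\pm\varepsilon_m\mp\delta(|x|^2-r^2)$ through the two viscosity inequalities. This is the comparison you invoke; it must use strict perturbations because lower tests are restricted to $\clG_k$. It gives $\|u_m-u\|_{L^\infty(B_r)}\le\varepsilon_m$. Hence $v_m=u_m-u_m(0)$ satisfies $v_m\ge b|x|^2-(B+2\varepsilon_m)$ with $B+2\varepsilon_m\le3b/8=br^2/2$. The slack $b/8$ absorbs the approximation error, and your Step~3 then finishes.

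\textbf{An alternative completion.} Your mollified-sublevel idea can also be completed with no quadratic lower bound at all.
\begin{itemize}
\item Let $\Omega_j$ be the component containing $0$ of a regular sublevel set $\{u*\varphi_{\epsilon_j}+\epsilon_j|x|^2<s\}$, with $s\in(b/16,b/8)$. It is smooth and $(k-1)$-convex, since $u\in\Phi_k$ by Lemma~\ref{lem:viscosity-admissibility}.
\item Let $V_j$ be the admissible solution vanishing on $\partial\Omega_j$. Comparison gives $\|V_j+s-u\|_{L^\infty(\Omega_j)}\to0$, hence $-V_j(0)\ge b/16-o(1)$.
\item Proposition~\ref{prop:boundary} and Lemma~\ref{lem:domain-interior} then give uniform $C^2$ bounds near $0$ directly.
\end{itemize}
Neither completion is present in your proposal.
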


\begin{remark}[Pogorelov's example and the growth condition]\label{rem:pogorelov}
Pogorelov's example shows that viscosity solutions can have an unbounded
Hessian when the growth hypothesis is absent; see \cite[Section 4.4]{Mooney}.
We use the construction in $k$ variables and extend it independently of
the remaining $n-k$ variables. For $3\le k\le n$, write
\begin{equation}\label{eq:pogorelov-coordinates}
x=(y,t,z)\in\R^{k-1}\times\R\times\R^{n-k},\qquad
\alpha_k=2-\frac2k.
\end{equation}
Choose a positive smooth solution $h$ on $(-T,T)$, for some $T>0$, of
\begin{equation}\label{eq:pogorelov-ode}
\alpha_k^{k-1}h^{k-2}
\bigl((\alpha_k-1)hh''-\alpha_k(h')^2\bigr)=1,
\qquad h(0)=1,\quad h'(0)=0.
\end{equation}
The function
\begin{equation}\label{eq:pogorelov-function}
P(y,t,z)=|y|^{\alpha_k}h(t)
\end{equation}
is defined on the slab $\R^{k-1}\times(-T,T)\times\R^{n-k}$.
For $r=|y|>0$, differentiation gives
\[
\det D^2_{y,t}P
=\alpha_k^{k-1}h^{k-2}
\bigl((\alpha_k-1)hh''-\alpha_k(h')^2\bigr)
r^{k\alpha_k-2k+2}=1.
\]
Here $k\alpha_k-2k+2=0$. The $k-2$ angular eigenvalues are positive,
and the radial--$t$ block has positive radial entry and determinant.
Thus the active $k\times k$ Hessian block is positive definite.

The remaining $n-k$ eigenvalues vanish, so $\sigma_k(D^2P)=1$ off
$\{y=0\}$. Since $\alpha_k>1$, the gradient extends continuously across
this set; restriction to lines therefore shows that $P$ is convex.

We verify the viscosity equation on $\{y=0\}$, where $P$ is not $C^2$.

At $y=0$, no $C^2$ upper touching test exists because $1<\alpha_k<2$.
A lower touching test has Hessian $A$ with $A|_S\le0$ on the flat subspace
$S=\{y=0\}$. If $A\in\clG_k$ and $\sigma_k(A)>0$, Maclaurin's inequalities
give $A\in\G_k$.

Restrict successively to $k-1$ hyperplanes within the preceding subspace,
each containing $S$.
Equation~\eqref{eq:restriction} gives $A|_S\in\G_1^{(n-k+1)}$, so
$\tr(A|_S)>0$, contradicting $A|_S\le0$.

Thus every admissible lower test has $\sigma_k(A)=0$.
This verifies the supersolution inequality on the singular set.

The growth condition in Theorem~\ref{thm:viscosity-interior} excludes this
example. Indeed, take $0<R<T$ and $x_*=(0,3R/4,0)\in B_R$.
Since $P(0)=P(x_*)=0$, the assumed lower bound would imply
\begin{equation}\label{eq:pogorelov-growth-contradiction}
0=P(x_*)\ge\frac{9bR^2}{16}-B
\ge\frac{5bR^2}{16}>0.
\end{equation}
Thus the quantitative lower bound prevents the flat singular axis of
Pogorelov's example. This example is defined on a slab and concerns local
regularity.
\end{remark}

The results also extend to admissible weak solutions in the sense of
Trudinger \cite{Tr97}. We recall his definition in
Section~\ref{sec:ibp-formulation}.

\begin{theorem}[Liouville theorem for admissible weak solutions]\label{thm:ibp-liouville}
Let $2\le k\le n$, and let $u\in C(\R^n)$ be an admissible weak solution,
in Trudinger's sense stated in Section~\ref{sec:ibp-formulation}, of
\begin{equation}\label{eq:ibp-entire-equation}
\sigma_k(D^2u)=1\qquad\text{in }\R^n.
\end{equation}
Suppose that, for some constants $b>0$ and $B\ge0$,
\begin{equation}\label{eq:ibp-growth}
u(x)\ge b|x|^2-B.
\end{equation}
Then
\begin{equation}\label{eq:ibp-conclusion}
u(x)=\tfrac12x^TAx+\ell\cdot x+c,
\qquad\sigma_k(A)=1,\qquad A\ge2bI,
\end{equation}
where $A$ is a constant symmetric matrix, $\ell\in\R^n$, and $c\in\R$.
\end{theorem}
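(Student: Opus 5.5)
The plan is to reduce Theorem~\ref{thm:ibp-liouville} to the viscosity Liouville theorem, Theorem~\ref{thm:viscosity-liouville}. The key input is the known equivalence between admissible weak solutions in Trudinger's sense and admissible viscosity solutions of $\sigma_k(D^2u)=1$ with continuous data. Trudinger's definition \cite{Tr97} views $u$ as a $k$-convex function. Its Hessian measure $\mu_k[u]$ is defined by weak continuity under local uniform approximation by smooth $k$-admissible functions, and the equation means that $\mu_k[u]$ equals Lebesgue measure. So the first step is to show that such a $u$ is a viscosity solution in the sense of Section~\ref{sec:viscosity-formulation}. Once that is done, hypothesis \eqref{eq:ibp-growth} is exactly \eqref{eq:viscosity-growth}, and Theorem~\ref{thm:viscosity-liouville} yields \eqref{eq:ibp-conclusion} directly.

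For the equivalence I would argue locally. Fix a ball $B\Subset\R^n$. Trudinger's definition gives smooth $k$-admissible functions $u_m\to u$ locally uniformly, and the measures $\sigma_k(D^2u_m)\,dx$ converge weakly to $\mu_k[u]=dx$. In general the approximants do not solve the equation, so I would not pass to the limit through them. Instead I would compare $u$ with the classical solution of a Dirichlet problem. Choose smooth boundary data $\varphi_m\to u$ uniformly on $\partial B$, and let $v_m$ be the smooth $k$-admissible solution of $\sigma_k(D^2v_m)=1$ in $B$ with $v_m=\varphi_m$ on $\partial B$, given by Caffarelli--Nirenberg--Spruck. Trudinger's comparison principle for Hessian measures says that $\mu_k[u]\le\mu_k[v]$ in $B$ together with $u\ge v$ on $\partial B$ implies $u\ge v$ in $B$. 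Applying it in both directions, with constant shifts $\pm\sup_{\partial B}|\varphi_m-u|$, gives $\|v_m-u\|_{L^\infty(B)}\to0$. Each smooth admissible $v_m$ is a viscosity solution. Stability of viscosity solutions under uniform convergence then shows that $u$ is a viscosity solution in $B$, and since $B$ was arbitrary, $u$ is one on $\R^n$. The admissibility convention matches the one in Section~\ref{sec:viscosity-formulation}: test functions are admissible for the supersolution test, and the subsolution test is automatic outside $\clG_k$. This is the standard Trudinger \cite{Tr97} correspondence, and I would cite it where possible rather than reprove it.

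The main obstacle is matching the precise conventions. The viscosity formulation in Section~\ref{sec:viscosity-formulation} presumably imposes admissibility only on test functions, not on $u$ itself. The Liouville theorem for Trudinger solutions, by contrast, needs admissibility of $u$ in the form of $k$-convexity, which is why the abstract says admissibility is required. I need to check two things. First, the comparison principle in \cite{Tr97} applies on balls with merely continuous boundary values; it does, via approximation of the boundary data, because $u$ is continuous up to $\partial B$. Second, the solution of the Dirichlet problem on a ball exists for smooth data, which uses the uniform convexity of the ball. If only the upper-semicontinuous class is available in Trudinger's definition, continuity of $u$ is assumed in the statement anyway, so uniform limits are legitimate. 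A possible alternative route avoids the viscosity theory altogether: take the smooth local solutions $v_m$ on large balls $B_R$, which inherit a lower bound of the form \eqref{eq:local-growth} up to errors of size $o(1)$. Applying Theorem~\ref{thm:interior} to them then gives uniform $C^2$ bounds, so $u\in C^{1,1}$ and then $u$ is smooth by Evans--Krylov. After that the smooth Theorem~\ref{thm:main} applies. I would keep this as a fallback if the viscosity reduction runs into convention mismatches.
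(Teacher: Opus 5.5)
Your proposal is correct and follows the paper's proof. The paper also invokes Trudinger's characterization of admissible weak solutions with continuous right-hand side as viscosity solutions, then applies Theorem~\ref{thm:viscosity-liouville} with the growth hypothesis unchanged; your sketched justification of that characterization (Hessian-measure comparison with Caffarelli--Nirenberg--Spruck solutions on balls, then stability under uniform limits) only unpacks the cited result and parallels the comparison step in the paper's proof of Theorem~\ref{thm:viscosity-interior}.
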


\begin{theorem}[Interior estimate for admissible weak solutions]\label{thm:ibp-interior}
Let $2\le k\le n$ and $b>0$. There exist constants
$\rho=\rho(n,k,b)\in(0,1)$ and $C=C(n,k,b)>0$ with the following property.

For $R>0$, suppose $u\in C(B_R)$ is an admissible weak solution,
in Trudinger's sense stated in Section~\ref{sec:ibp-formulation}, of
\begin{equation}\label{eq:ibp-local-equation}
\sigma_k(D^2u)=1\quad\text{in }B_R,\qquad u(0)=0.
\end{equation}
Assume that, for some constant $B$,
\begin{equation}\label{eq:ibp-local-growth}
u(x)\ge b|x|^2-B\quad(x\in B_R),\qquad
0\le B\le\frac{bR^2}{4}.
\end{equation}
Then $u\in C^\infty(B_{\rho R})$ and
\begin{equation}\label{eq:ibp-interior-estimate}
\sup_{B_{\rho R}}\left(\frac{|u|}{R^2}+
\frac{|Du|}{R}+|D^2u|\right)\le C.
\end{equation}
\end{theorem}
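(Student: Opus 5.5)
We reduce Theorem~\ref{thm:ibp-interior} to the viscosity version, Theorem~\ref{thm:viscosity-interior}. The plan is to show that a continuous admissible weak solution in Trudinger's sense is a viscosity solution in the sense of Section~\ref{sec:viscosity-formulation}. The hypotheses \eqref{eq:ibp-local-equation}--\eqref{eq:ibp-local-growth} are identical to \eqref{eq:viscosity-local-equation}--\eqref{eq:viscosity-local-growth}. The conclusion $u\in C^\infty(B_{\rho R})$ together with \eqref{eq:ibp-interior-estimate} then follows verbatim, with the same constants $\rho(n,k,b)$ and $C(n,k,b)$.

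\textbf{Step 1: approximation.} By Trudinger's theory \cite{Tr97}, an admissible weak solution $u\in C(B_R)$ is, on each subball $B_{R'}\Subset B_R$, the locally uniform limit of smooth $k$-admissible functions $u_m$. The Hessian measures $\sigma_k[u_m]$ converge weakly to $\sigma_k[u]=dx$; equivalently, $u$ is $k$-convex in the viscosity sense and its Hessian measure equals Lebesgue measure. I will use the weak continuity of Hessian measures under uniform convergence of $k$-convex functions.

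\textbf{Step 2: smooth comparison problems.} On a ball $B_r(x_0)\Subset B_R$, I solve the Dirichlet problem $\sigma_k(D^2w_m)=1$ in $B_r(x_0)$ with $w_m=u_m^\eps$ on $\partial B_r(x_0)$, where $u_m^\eps$ are smooth boundary approximations of $u$. Caffarelli--Nirenberg--Spruck give a unique smooth admissible solution on the uniformly convex ball. The comparison principle for Hessian measures (Trudinger--Wang) shows $w_m\to u$ uniformly on $\overline{B_r(x_0)}$, since $u$ and the limit of the $w_m$ share boundary data and the same Hessian measure. Uniform limits of smooth solutions are viscosity solutions by the standard stability theorem, applied with the paper's admissibility convention. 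Hence $u$ is a viscosity solution on $B_r(x_0)$. Since viscosity solutions are local, $u$ is one on all of $B_R$.

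\textbf{Step 3: conclusion.} Apply Theorem~\ref{thm:viscosity-interior}.

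The main obstacle is Step~2. We must show that the uniqueness of weak solutions with prescribed Hessian measure and continuous boundary data identifies $u$ with the viscosity limit. This requires the comparison principle for $k$-convex functions in the Trudinger--Wang measure framework. We also need care that the notion of admissible lower tests in Section~\ref{sec:viscosity-formulation}, including the supersolution condition on $\clG_k$, matches what the stability argument produces. If that identification is awkward, the fallback is to verify the viscosity inequalities directly at a touching point. For a quadratic test $\varphi$ touching from above with $D^2\varphi\in\G_k$ and $\sigma_k(D^2\varphi)<1$, perturbing to a strict touch and comparing Hessian measures on a small sublevel set $\{u>\varphi-\delta\}$ via the comparison principle yields a contradiction. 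The reverse inequality is handled symmetrically.
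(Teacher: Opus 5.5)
Your proof is correct, and its top-level reduction is the same as the paper's. The paper's proof in Section~\ref{sec:ibp} is two lines: it obtains the viscosity property by citing Trudinger's characterization of admissible weak solutions with continuous right-hand side \cite[p.~1255]{Tr97}, then invokes Theorem~\ref{thm:viscosity-interior}. You instead prove the implication ``weak $\Rightarrow$ viscosity'' yourself, and your Step~2 does close.

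\begin{itemize}
\item The CNS solutions $w_m$ on $B_r(x_0)\Subset B_R$ are uniformly Cauchy on $\overline{B_r(x_0)}$ by classical comparison. Their limit $w$ is therefore continuous up to the boundary, equals $u$ there, lies in $\Phi_k$, and has $\mu_k[w]=dx$ by the weak continuity \eqref{eq:hessian-measure-definition}.
\item Since $\mu_k[u]=dx$ as well (the paper records this at the end of its Hessian-measure discussion), the Trudinger--Wang comparison principle for $k$-convex functions ordered by Hessian measure, applied in both directions, gives $w=u$.
\item Stability is compatible with the paper's convention for lower tests, because $\clG_k\cap\{\sigma_k>0\}=\G_k$. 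A strict lower test with $D^2\phi(x)\in\clG_k$ and $\sigma_k(D^2\phi(x))>1$ therefore has $D^2\phi\in\G_k$ at the nearby contact points with $w_m$. Monotonicity then contradicts $\sigma_k(D^2w_m)=1$. Your fallback is thus unnecessary.
\end{itemize}

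What your route buys is that it uses only $u\in\Phi_k\cap C$ and $\mu_k[u]=dx$. It therefore covers Hessian-measure solutions directly and makes explicit which inputs from the weak theory matter. The cost is an extra citation, the measure comparison principle, in place of the paper's single one.

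You could also skip the viscosity notion altogether. Once $w_m\to u$ uniformly on $B_{\sqrt3R/2}$, these are exactly the approximants in the proof of Theorem~\ref{thm:viscosity-interior}. You could run that argument directly: normalization at the origin, Lemma~\ref{lem:margin}, then Evans--Krylov.
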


\begin{remark}[Distributional solutions without admissibility]\label{rem:weak-counterexample}
The conclusions of Theorems~\ref{thm:ibp-liouville} and~\ref{thm:ibp-interior}
fail for unrestricted distributional solutions of the $2$-Hessian equation.
We give an example in every dimension $n\ge2$.

For $w\in W^{1,2}_{\mathrm{loc}}$, the distribution
\begin{equation}\label{eq:very-weak-sigma-two}
\mathcal S_2[w]
=\frac12\sum_{i,j}\partial_{ij}(w_iw_j)-\frac12\Delta|Dw|^2
\end{equation}
agrees with $\sigma_2(D^2w)$ for smooth $w$; see
\cite[Definition 1.1 and Section 2.1]{LQ}, with our normalization.
Its pairing with a test function $\phi$ is given by the gradient
integrals on the right-hand side of \eqref{eq:ibp-two-gradient}.

Set
\begin{equation}\label{eq:folded-quadratic}
c=\binom{n}{2}^{-1/2},\qquad
q(x)=\frac c2|x|^2+cx_1,\qquad w(x)=|q(x)|.
\end{equation}
Since $Dw=\operatorname{sgn}(q)Dq$ almost everywhere, we obtain
\begin{equation}\label{eq:folded-equation}
Dw\otimes Dw=Dq\otimes Dq,\qquad
\mathcal S_2[w]=\mathcal S_2[q]=\binom{n}{2}c^2=1.
\end{equation}
Moreover, completing the square gives
\begin{equation}\label{eq:folded-growth}
w(x)\ge q(x)\ge\frac c4|x|^2-c.
\end{equation}
Thus $w$ has positive quadratic lower growth but is not quadratic.
On $\{q<0\}$, its Hessian is $-cI$, so $w$ is not $2$-admissible.

The conclusion of Theorem~\ref{thm:ibp-interior} also fails for this
distributional class: $w(0)=0$, and the choices
$b=c/4$, $B=c$, and $R=4$ satisfy $B=bR^2/4$.
Nevertheless, $w(te_1)=|ct+ct^2/2|$ is not differentiable at the origin.
\end{remark}

The upper-test inequality itself implies admissibility in
Theorem~\ref{thm:viscosity-liouville} and Theorem~\ref{thm:viscosity-interior};
see Lemma~\ref{lem:viscosity-admissibility}.
In Theorem~\ref{thm:ibp-liouville} and Theorem~\ref{thm:ibp-interior},
admissibility is part of Trudinger's weak-solution definition.

For the smooth solution in Theorem~\ref{thm:interior}, the growth bound
ensures that $u$ attains a minimum inside the ball.
At this point its Hessian is positive semidefinite, and the equation
gives admissibility.

Maclaurin's inequalities then give positive lower bounds for the lower
symmetric polynomials. These bounds prevent the Hessian from reaching
the boundary of $\G_k$; see Section~\ref{sec:branch}.

To prove Theorem~\ref{thm:interior}, we choose a regular sublevel
component whose diameter is bounded by a multiple of $R$.
A distance barrier shows that this component contains a ball of
radius comparable to $R$.

The same barrier separates every fixed negative sublevel from the
boundary. The interior gradient estimate of Chou and Wang then
provides the gradient bound needed in their weighted Hessian estimate
\cite[Theorems 3.2 and 1.5]{CW}.

The construction has two geometric features. When $k<n$, the principal
curvatures of a $(k-1)$-convex boundary may be negative. The algebraic
inequality in Lemma~\ref{lem:transform} controls their fractional transformation.

The distance function is also nonsmooth at the cut locus.
Local semiconvexity of the distance power passes its almost-everywhere
Hessian inequalities to smooth convolutions, which can be used in the
comparison argument.

\subsection{Organization of the paper}

Section~\ref{sec:distance} constructs the admissible distance power,
and Section~\ref{sec:boundary} proves the uniform boundary estimate.
Section~\ref{sec:interior} proves Theorem~\ref{thm:interior}.

Section~\ref{sec:liouville} applies this estimate and the Evans--Krylov
theorem to prove Theorem~\ref{thm:main}.
Section~\ref{sec:viscosity} proves the viscosity results,
Theorems~\ref{thm:viscosity-liouville} and~\ref{thm:viscosity-interior}.
Section~\ref{sec:ibp} treats admissible weak solutions and proves
Theorems~\ref{thm:ibp-liouville} and~\ref{thm:ibp-interior}.
Appendix~\ref{sec:algebra} contains the two algebraic lemmas.

\section{Admissible powers of the distance function}\label{sec:distance}

We construct a function that vanishes on the boundary and whose Hessian
belongs to the closed admissible cone. At smooth points of the distance
function, this follows from Lemma~\ref{lem:transform}.

For $2\le k\le n$, fix
\begin{equation}\label{eq:parameters}
N=n-k+1,\qquad
\theta=\frac2k\left(1-\frac1{2N}\right),\qquad
\alpha=1-\theta,\qquad\beta=\frac\alpha k.
\end{equation}
These constants satisfy $0<\theta<1$ and $0<\beta<\alpha<1$.

Let $\Omega\subset\R^n$ be a nonempty bounded smooth domain whose
outward principal curvatures belong to $\G_{k-1}^{(n-1)}$.
We take the outward principal curvatures to be positive on spheres.

Set
\begin{equation}\label{eq:distance-def}
D=\diam\Omega,\qquad d(x)=\dist(x,\partial\Omega),\qquad v(x)=-d(x)^\alpha.
\end{equation}
A function $f$ is locally semiconvex if, on a convex neighborhood of
each point, $f(x)+\tfrac C2|x|^2$ is convex for some $C\ge0$.
Local semiconcavity means that $-f$ is locally semiconvex.

The spectral matrix cone $\G_j$ is convex and is preserved under
addition of positive-semidefinite matrices. The same statements hold
for $\clG_j$. These standard properties are summarized in
\cite[Section 2]{BW}.

At smooth points of $d$, the normal eigenvalue of $D^2v$ is positive.
Lemma~\ref{lem:transform}
shows that it compensates for the possible negative tangential
eigenvalues, with a choice of $\alpha$ depending only on $n$ and $k$.

\begin{lemma}[The distance power]\label{lem:distance}
The function $v=-d^\alpha$ is locally semiconvex. At almost every point,
\begin{equation}\label{eq:distance-hessian}
D^2v\in\clG_k,\qquad
\Delta v\ge c_0:=\alpha(1-\alpha)D^{\alpha-2}.
\end{equation}
Its convolutions with nonnegative mollifiers satisfy the same inequalities
wherever the convolution is defined inside $\Omega$.
\end{lemma}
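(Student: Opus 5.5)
The proof has three parts: the Hessian computation at smooth points of $d$, semiconvexity, and passage to almost every point and to mollifications.

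\textbf{Smooth points.} At a point $x\in\Omega$ where $d$ is smooth, let $y\in\partial\Omega$ be the unique nearest point and $\kappa_1,\dots,\kappa_{n-1}$ the outward principal curvatures at $y$. The standard formula (Gilbarg--Trudinger, \S14.6) says that in a principal frame $D^2d$ is diagonal with entries
\[
-\frac{\kappa_i}{1-\kappa_i d},\quad 1\le i\le n-1,
\]
and $0$ in the normal direction, where $1-\kappa_i d>0$. Since $|Dd|=1$,
\[
D^2v=-\alpha d^{\alpha-1}D^2d+\alpha(1-\alpha)d^{\alpha-2}Dd\otimes Dd .
\]
The eigenvalues are therefore
\[
\alpha d^{\alpha-1}\frac{\kappa_i}{1-\kappa_i d}\ \ (\text{tangential}),\qquad \alpha(1-\alpha)d^{\alpha-2}\ \ (\text{normal}).
\]
Factor out $\alpha d^{\alpha-2}$ and set $\mu_i=\kappa_i d/(1-\kappa_i d)$. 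The question reduces to whether $(\mu_1,\dots,\mu_{n-1},1-\alpha)=(\mu,\theta)$ lies in $\clG_k$.

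This is where I would invoke Lemma~\ref{lem:transform}. The map $t\mapsto t/(1-t)$ applied to $\kappa d\in\G_{k-1}^{(n-1)}$, with all $\kappa_i d<1$, controls the negative entries. Adding the normal entry $\theta$, with $\theta$ chosen as in \eqref{eq:parameters} depending on $N=n-k+1$, yields a vector in $\clG_k$. I expect this algebraic step to be the main obstacle, but it is exactly what the appendix lemma is built for, so here I only need to check that its hypotheses match: curvatures in $\G_{k-1}^{(n-1)}$ and $\kappa_i d<1$.

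The Laplacian bound at smooth points needs a separate estimate. Since $\kappa\in\G_{k-1}\subset\G_1$, one has $\sum\kappa_i>0$. Concavity of $t\mapsto t/(1-t)$ on $(-\infty,1)$ does not directly give $\sum\mu_i\ge0$. However, $\clG_k\subset\clG_1$ gives $\sum\mu_i+\theta\ge0$, and this suffices only if the lemma actually supplies the stronger $\mu\in\clG_1^{(n-1)}$. I would extract this from Lemma~\ref{lem:transform} (or its proof), since $\G_{k-1}$ containment of the transformed vector should follow similarly.

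With $\sum\mu_i\ge0$, we get $\Delta v\ge\alpha(1-\alpha)d^{\alpha-2}$. Since $d\le D$ and $\alpha-2<0$, this gives $\Delta v\ge\alpha(1-\alpha)D^{\alpha-2}=c_0$.

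\textbf{Semiconvexity.} $d$ is locally semiconcave in $\Omega$, because it is the infimum of the smooth functions $|x-y|$, $y\in\partial\Omega$, whose Hessians are uniformly bounded on compact subsets of $\Omega$. The function $s\mapsto -s^\alpha$ is convex and decreasing on $(0,\infty)$ and smooth near $d(x)>0$. Write $d=g-\tfrac C2|x|^2$ locally with $g$ concave, and apply the chain rule in the semiconcave calculus. This shows $v$ is locally semiconvex; a direct check along line segments with the second-order difference quotient suffices.

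\textbf{Almost every point and mollifications.} By Alexandrov's theorem $v$ is twice differentiable almost everywhere. Moreover $d$ is smooth off the cut locus together with $\partial\Omega$, and the cut locus has measure zero, so the computation above holds almost everywhere.

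For convolutions, local semiconvexity means the distributional Hessian of $v$ equals its absolutely continuous part (the a.e. Hessian) plus a nonnegative singular matrix measure. Convolving with a nonnegative mollifier $\phi$ gives
\[
D^2(v*\phi)=(D^2v)_{ac}*\phi+(\text{positive semidefinite}).
\]
The first term is an average of matrices in the convex cone $\clG_k$ with Laplacian at least $c_0$, so it lies in $\clG_k$ with trace at least $c_0$. The cone is preserved under addition of positive semidefinite matrices, and the trace only increases. Hence $v*\phi$ satisfies both inequalities wherever the convolution is defined inside $\Omega$.
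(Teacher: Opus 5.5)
Your proposal is correct and follows essentially the paper's route. You compute the eigenvalues $\alpha d^{\alpha-2}(\mu,\theta)$ at smooth points and feed them into Lemma~\ref{lem:transform}, then prove semiconvexity, then use the Lebesgue decomposition of $D^2v$ with a positive semidefinite singular part under mollification. The differences are cosmetic: the paper proves smoothness off a null set via Rademacher and Sard for the normal map, and gets semiconvexity from $v=\sup_{y\in\partial\Omega}(-|x-y|^\alpha)$, rather than composing the semiconcave $d$ with a decreasing function. Your hedge on the Laplacian is unnecessary, since Lemma~\ref{lem:transform} explicitly gives $\mu\in\G_{k-1}^{(n-1)}\subset\G_1^{(n-1)}$. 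Moreover, $t\mapsto t/(1-t)$ is convex, not concave, on $(-\infty,1)$, and $\mu_i-a_i=a_i^2/(1-a_i)\ge0$ yields $\sigma_1(\mu)\ge\sigma_1(a)>0$ directly.
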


\begin{proof}
The distance $d$ is Lipschitz and hence differentiable almost everywhere.
If $y$ is a nearest boundary point to such an $x$, then $d(z)\le|z-y|$
with equality at $x$. Differentiation gives
\[
Dd(x)=\frac{x-y}{d(x)},\qquad y=x-d(x)Dd(x).
\]
Thus the nearest boundary point is unique. The focal values of the smooth
normal map have measure zero by Sard's theorem, so it remains to compute
at points with a unique, nonfocal nearest boundary point.

Let $\nu$ be the outward unit normal. The minimizing segment is normal
and points inward, so $x=y-d(x)\nu(y)$.

Write $\Psi(y,s)=y-s\nu(y)$, and let $e_i$ be a principal direction at $y$.
With our sign convention for the curvatures,
\[
D_y\Psi(y,s)e_i=(1-s\kappa_i(y))e_i.
\]
Choose a unit-speed boundary curve $\gamma_i$ through $y$ with
$\gamma_i'(0)=e_i$. Since $y$ minimizes the distance to $x$, our curvature
convention gives
\[
0\le\left.\frac{d^2}{dt^2}\frac{|x-\gamma_i(t)|^2}{2}\right|_{t=0}
=1-d\kappa_i(y).
\]
Nonfocality makes these inequalities strict, so the normal map has
a smooth local inverse.

Let $x_m\to x$ and choose any nearest boundary points $y_m$.
Every convergent subsequence of $y_m$ has a nearest point to $x$ as its
limit, because distance is continuous.

Uniqueness of the nearest point and compactness give $y_m\to y$.
Also $d(x_m)\to d(x)$, so the pairs $(y_m,d(x_m))$ remain in the
local inverse neighborhood for all sufficiently large $m$.

In this neighborhood, the inverse normal coordinate $s$ equals $d$.
Thus $d$ is smooth there, with
\[
Dd(\Psi(y,s))=-\nu(y).
\]
Differentiating this local gradient identity in each principal direction
gives the eigenvalues of $D^2d$:
\begin{equation}\label{eq:distance-eigenvalues}
-\frac{\kappa_1(y)}{1-d\kappa_1(y)},\ldots,
-\frac{\kappa_{n-1}(y)}{1-d\kappa_{n-1}(y)},\ 0,
\qquad 1-d\kappa_i(y)>0.
\end{equation}
To compute the Hessian of $v$, apply the chain rule:
\[
D^2v=-\alpha d^{\alpha-1}D^2d
+\alpha(1-\alpha)d^{\alpha-2}Dd\otimes Dd.
\]
The second term acts only in the normal direction, since $Dd$ is a unit
normal. Introduce $a_i=d\kappa_i(y)$ and $\mu_i=a_i/(1-a_i)$.
The eigenvalues of $D^2v$ are therefore
\begin{equation}\label{eq:power-eigenvalues}
\alpha d^{\alpha-2}(\mu_1,\ldots,\mu_{n-1},1-\alpha).
\end{equation}
Here $a\in\G_{k-1}$ and $a_i<1$. Since $1-\alpha=\theta$,
Lemma~\ref{lem:transform} gives $D^2v\in\clG_k$.

The same lemma gives $\sigma_1(\mu)>0$. Using $d\le D$ and
$\alpha-2<0$, we obtain
\begin{equation}\label{eq:trace-lower}
\Delta v
=\alpha d^{\alpha-2}\bigl(\sigma_1(\mu)+1-\alpha\bigr)
\ge\alpha(1-\alpha)D^{\alpha-2}=c_0.
\end{equation}

The distance $d$ is locally semiconcave in $\Omega$; see
\cite[Proposition 2.2.2]{CS}. We can check the needed semiconvexity of $v$
directly on any convex neighborhood $V\Subset\Omega$.

Choose $a>0$ such that $\dist(V,\partial\Omega)\ge a$.
For $y\in\partial\Omega$, write $r=|x-y|$ and $e=(x-y)/r$.
Since $r\ge a$, differentiation gives
\[
D^2_x(-|x-y|^\alpha)
=\alpha r^{\alpha-2}\bigl((2-\alpha)e\otimes e-I\bigr)
\ge-\alpha a^{\alpha-2}I.
\]
Hence each function $-|x-y|^\alpha+\tfrac12\alpha a^{\alpha-2}|x|^2$
is convex on $V$. Their supremum is convex as well, and
\[
v(x)=\sup_{y\in\partial\Omega}(-|x-y|^\alpha).
\]
This proves local semiconvexity, including at points where the nearest
boundary point is not unique.

The distributional Hessian of the locally semiconvex function $v$ is a
matrix-valued Radon measure; see \cite[Theorem 2.3.1]{CS}.
Its Lebesgue decomposition is
\begin{equation}\label{eq:hessian-measure}
D^2v=H\,dx+M_s,\qquad
H\in\clG_k\ \text{a.e.},\qquad\tr H\ge c_0.
\end{equation}
The density $H$ agrees almost everywhere with the classical Hessian
computed above. Locally, $v+\tfrac C2|x|^2$ is convex for some $C\ge0$,
so $D^2v+CI\,dx$ is a positive matrix-valued measure.

The added quadratic changes only the absolutely continuous part.
Hence the singular part $M_s$ is positive semidefinite.

Let $\varphi_\rho$ be a nonnegative mollifier of integral one.
For $v_\rho=v*\varphi_\rho$, distributional differentiation gives
\begin{equation}\label{eq:mollification}
D^2v_\rho=H*\varphi_\rho+M_s*\varphi_\rho.
\end{equation}
The first term is a nonnegative average of matrices in $\clG_k$,
so it belongs to $\clG_k$ by convexity. The second term is positive
semidefinite and therefore preserves this cone under addition.

Taking the trace in \eqref{eq:mollification} also gives
\[
\Delta v_\rho=(\tr H)*\varphi_\rho+(\tr M_s)*\varphi_\rho
\ge c_0\int\varphi_\rho=c_0.
\]
These arguments apply whenever the support of the translated mollifier
is contained in $\Omega$.
\end{proof}

\section{A uniform boundary estimate}\label{sec:boundary}

We perturb the distance power by a quadratic function. The perturbation
makes the barrier strictly admissible, while its trace lower bound
supplies the strict differential inequality used in comparison.

\begin{proposition}[Uniform boundary estimate]\label{prop:boundary}
Let $\Omega\subset\R^n$ be a nonempty bounded smooth domain whose
outward principal curvatures belong to $\G_{k-1}^{(n-1)}$.
Suppose $U\in C^2(\Omega)\cap C(\overline\Omega)$ is $k$-admissible and
\begin{equation}\label{eq:dirichlet}
\sigma_k(D^2U)=1\quad\text{in }\Omega,
\qquad U=0\quad\text{on }\partial\Omega.
\end{equation}
Set $D=\diam\Omega$ and $d(x)=\dist(x,\partial\Omega)$.
There is a constant $C_0=C_0(n,k)>0$ such that
\begin{equation}\label{eq:boundary}
0\le-U(x)\le C_0D^2\left(\frac{d(x)}D\right)^\beta
\qquad(x\in\Omega),
\end{equation}
where $\beta$ is defined in \eqref{eq:parameters}.
\end{proposition}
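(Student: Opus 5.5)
We prove the two inequalities separately; the upper bound on $-U$ is the substantive one. For the lower bound $-U\ge0$: since $U$ is $k$-admissible, $\Delta U=\sigma_1(D^2U)>0$ in $\Omega$. So $U$ is subharmonic, and the maximum principle with $U=0$ on $\partial\Omega$ gives $U\le0$.

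For the upper bound we build a subsolution $w$ with $w\le0=U$ on $\partial\Omega$ and $\sigma_k(D^2w)>1$ in $\Omega$, of the form
\[
w=-K D^{2-\beta}d^\beta ,\qquad \beta=\alpha/k .
\]
Then the comparison principle for $k$-admissible functions gives $w\le U$, which is \eqref{eq:boundary} with $C_0=K$. By scaling ($x\mapsto x/D$, $U\mapsto U/D^2$) we may assume $D=1$; it then suffices to show $\sigma_k(D^2w)\ge 2$ wherever $w$ is smooth, for some $K=K(n,k)$.

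\emph{Computation at smooth points.} At smooth points of $d$, the computation in the proof of Lemma~\ref{lem:distance}, with $\alpha$ replaced by $\beta$, gives
\[
\text{eigenvalues of }D^2w=K\beta d^{\beta-2}(\mu_1,\dots,\mu_{n-1},1-\beta).
\]
Since $1-\beta>1-\alpha=\theta$, this vector is the vector of \eqref{eq:power-eigenvalues} plus the nonnegative normal increment $(1-\beta-\theta)e_n$. Lemma~\ref{lem:transform} places $(\mu,\theta)$ in $\clG_k$, and adding a positive-semidefinite matrix preserves the cone. Moreover, expanding in the last entry,
\[
\sigma_k\bigl(\mu,1-\beta\bigr)\ge(1-\beta-\theta)\,\sigma_{k-1}(\mu).
\]
It therefore remains to bound $\sigma_{k-1}(\mu)$ from below by a constant depending only on $(n,k)$. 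Given that, $\sigma_k(D^2w)\ge cK^kd^{k(\beta-2)}\ge cK^k$ because $d\le1$, and choosing $K$ large gives $\sigma_k(D^2w)\ge2$.

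\emph{The main obstacle.} The key step is the lower bound on $\sigma_{k-1}(\mu)$, since $\mu\in\G_{k-1}$ may approach $\partial\G_{k-1}$ when the boundary is only $(k-1)$-convex. The quantities available are
\[
a_i=d\kappa_i<1,\qquad a\in\G_{k-1},\qquad \mu_i=\frac{a_i}{1-a_i}.
\]
I would first try to extract from the proof of Lemma~\ref{lem:transform} a quantitative version: positivity of $\sigma_k(\mu,\theta)$ uniform in $a$, or a trace bound $\sigma_1(\mu)\ge0$ combined with the margin $1-\beta-\theta$.

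If that fails, I would change the barrier instead of the algebra. Take
\[
w=-K\bigl(d^\beta-\delta d^{\alpha}\bigr)\quad\text{or}\quad w=-K\bigl(-v\bigr)^{1/k}=-Kd^{\beta},
\]
and compute $D^2(g\circ v)$ with $g(s)=-(-s)^{1/k}$ concave-decreasing composed correctly. The convex direction then contributes $g''(v)\,Dv\otimes Dv$, while $g'(v)D^2v$ keeps both the trace bound $\Delta v\ge c_0$ from Lemma~\ref{lem:distance} and membership in $\clG_k$. Strictness would then come from adding a small multiple $\eps(|x-x_0|^2-1)$ of a quadratic, using that $\clG_k+\eps I\subset\G_k$ with $\sigma_k\ge\eps^k\binom nk$. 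The cost is an additive $\eps$ error, which is absorbed by letting $\eps\to0$ only after the comparison.

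\emph{The cut locus.} The distance function is nonsmooth there, so I would replace $d^\beta$ by its mollifications, as in the last part of Lemma~\ref{lem:distance}, on subdomains $\{d>\rho'\}$. The a.e.\ Hessian inequalities pass to convolutions by convexity of the cone and concavity of $\sigma_k^{1/k}$ on it, and the singular part of the Hessian measure is positive semidefinite. The boundary values on $\{d=\rho'\}$ are $O(\rho'^{\beta})$, and we compare there and let $\rho'\to0$.
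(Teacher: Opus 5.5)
Your proof of $U\le0$ is the paper's, and your mollification treatment of the cut locus is essentially the paper's as well. The upper bound, however, has a genuine gap. The barrier $w=-KD^{2-\beta}d^\beta$ cannot be a subsolution with $K=K(n,k)$, because the uniform lower bound on $\sigma_{k-1}(\mu)$ you need is false, not merely hard to prove.

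Take $n=k=2$, so $\alpha=\tfrac12$ and $\beta=\tfrac14$. Let $\Omega$ be a smooth strictly convex planar domain of diameter about $1$ and width about $\tfrac12$, with a boundary arc of length about $\tfrac12$ on which the curvature is at most $\tau$ (a smoothed stadium with slightly bent sides). At a point at distance $\tfrac1{10}$ from the middle of that arc, $d$ is smooth and $\mu_1=d\kappa/(1-d\kappa)=O(\tau)$. Hence
\[
\sigma_2(D^2w)=K^2\beta^2(1-\beta)d^{2\beta-4}\mu_1\longrightarrow0\qquad(\tau\to0,\ K\ \text{fixed}).
\]
Since $C_0$ must not depend on the boundary curvature, no quantitative version of Lemma~\ref{lem:transform} can rescue this route. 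Near nearly degenerate boundary points, the only uniformly positive quantity is the normal eigenvalue, and hence the trace.

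Your fallback does not close the gap either. Adding $\eps(|x-x_0|^2-1)$ alone gives only $\sigma_k\ge\binom nk(2\eps)^k$, which is below $1$ for small $\eps$. So you cannot let $\eps\to0$ after the comparison with $K$ fixed, because the comparison itself requires $\sigma_k(D^2w)>1$ for each $\eps$.

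The missing idea is the mixed term in the expansion $\sigma_k(KA+2\eps I)=\sum_j\binom{n-j}{k-j}(2\eps)^{k-j}K^j\sigma_j(A)$. All its terms are nonnegative for $A\in\clG_k$, and keeping $j=1$ gives $\sigma_k\ge\binom{n-1}{k-1}(2\eps)^{k-1}K\tr A$. This needs only the curvature-independent bound $\tr D^2(-d^\alpha)\ge\alpha(1-\alpha)D^{\alpha-2}$. Coupling $K=L_{n,k}D^{2-\alpha}\eps^{1-k}$ makes $w=-Kd^\alpha+\eps(|x-x_0|^2-D^2)$ a strict subsolution. Comparison then gives
\[
-U(x_0)\le L_{n,k}D^{2-\alpha}\eps^{1-k}d(x_0)^\alpha+\eps D^2 .
\]
The exponent $\beta=\alpha/k$ appears only after optimizing $\eps\sim(d(x_0)/D)^{\alpha/k}$ separately for each $x_0$.

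This is also why the barrier should use the power $\alpha$, the largest exponent $\gamma$ for which Lemma~\ref{lem:transform} yields $D^2(-d^\gamma)\in\clG_k$, rather than $\beta$. With $d^\beta$ in this scheme you would obtain only the weaker rate $d^{\beta/k}$.
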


The constant is independent of the boundary curvature and its
smoothness norms. We prove the proposition in the following two steps.

Since $D^2U\in\G_k$, we have $\Delta U>0$. The maximum principle and
the boundary condition give $U\le0$.

\subsection{Prepare the comparison function}

Fix $x_0\in\Omega$ and $\eps>0$. Introduce the barrier
\begin{equation}\label{eq:barrier}
w=Kv+\eps\bigl(|x-x_0|^2-D^2\bigr),\qquad
K=L_{n,k}D^{2-\alpha}\eps^{1-k},
\end{equation}
where
\begin{equation}\label{eq:Lconstant}
L_{n,k}
=\frac{2}{2^{k-1}\binom{n-1}{k-1}\alpha(1-\alpha)}.
\end{equation}
On $\partial\Omega$, we have $v=0$ and $|x-x_0|\le D$.
Consequently, $w\le0=U$ there.

For a symmetric matrix $A\in\clG_k$, expansion of the symmetric
polynomial gives
\begin{equation}\label{eq:expansion}
\sigma_k(KA+2\eps I)
=\sum_{j=0}^k\binom{n-j}{k-j}(2\eps)^{k-j}K^j\sigma_j(A).
\end{equation}
To see the coefficient, choose $j$ eigenvalue factors from $KA$ in each
product defining $\sigma_k$; the remaining $k-j$ factors contribute
$2\eps$ and can be chosen in $\binom{n-j}{k-j}$ ways.

Since $A\in\clG_k$, every $\sigma_j(A)$ in this expansion is
nonnegative. Keeping the $j=1$ term gives
\begin{equation}\label{eq:linear-term}
\sigma_k(KA+2\eps I)
\ge\binom{n-1}{k-1}(2\eps)^{k-1}K\tr A.
\end{equation}

Apply Lemma~\ref{lem:distance} to $v_\rho$, and set
$w_\rho=Kv_\rho+\eps(|x-x_0|^2-D^2)$.
Adding $2\eps I$ makes all the symmetric polynomials of orders
$1,\ldots,k$ strictly positive, by the same expansion.

The powers of $D$ and $\eps$ in $K$ cancel those in
$(2\eps)^{k-1}c_0$. Thus \eqref{eq:distance-hessian} and
\eqref{eq:Lconstant} give the fixed strict inequality
\begin{equation}\label{eq:strict-barrier}
D^2w_\rho\in\G_k,\qquad
\sigma_k(D^2w_\rho)
\ge\binom{n-1}{k-1}(2\eps)^{k-1}Kc_0=2.
\end{equation}

\subsection{Exclude the interior maximum}

We prove $w\le U$. Suppose
$M:=\max_{\overline\Omega}(w-U)>0$.
Continuity gives $\delta>0$ such that $w-U<M/4$ whenever $d\le2\delta$.

The compact set $K_\delta=\{x\in\Omega:d(x)\ge\delta\}$ contains
a point $x_M$ where $w-U=M$. Every point of $\partial K_\delta$
satisfies $d=\delta$, so the collar estimate applies there.

Choose $\rho<\delta/2$ so small that
$\|w_\rho-w\|_{L^\infty(K_\delta)}<M/4$.

At $x_M$ we have $w_\rho-U>3M/4$, whereas on $\partial K_\delta$
we have $w_\rho-U<M/2$. Therefore its maximum on $K_\delta$ occurs
at an interior point, where the mollified barrier is smooth.

At that point, $D^2w_\rho\le D^2U$. Monotonicity of $\sigma_k$ on
$\G_k$ and \eqref{eq:strict-barrier} give
\begin{equation}\label{eq:comparison}
2\le\sigma_k(D^2w_\rho)\le\sigma_k(D^2U)=1.
\end{equation}
This contradiction excludes a positive maximum and proves $w\le U$.

Evaluating at $x_0$ yields
\begin{equation}\label{eq:preoptimization}
-U(x_0)\le L_{n,k}D^{2-\alpha}\eps^{1-k}d(x_0)^\alpha
+\eps D^2.
\end{equation}
We balance the two terms by choosing
$\eps=L_{n,k}^{1/k}(d(x_0)/D)^{\alpha/k}$.
Since $\beta=\alpha/k$, this choice satisfies
\[
L_{n,k}D^{2-\alpha}\eps^{1-k}d(x_0)^\alpha
=\eps D^2
=L_{n,k}^{1/k}D^2\left(\frac{d(x_0)}D\right)^\beta.
\]
Substitution into \eqref{eq:preoptimization} proves \eqref{eq:boundary}
with $C_0=2L_{n,k}^{1/k}$. The point $x_0$ was arbitrary.
\qed

\section{Proof of the interior estimate}\label{sec:interior}

We first convert Proposition~\ref{prop:boundary} into an interior
estimate for the zero-boundary problem. We then apply that estimate
to a sublevel component of the solution in Theorem~\ref{thm:interior}.

\subsection{Estimate the solution away from the boundary}

\begin{lemma}[Interior estimate on a zero-boundary domain]\label{lem:domain-interior}
Let $\Omega$ satisfy the assumptions of Proposition~\ref{prop:boundary},
and let $U\in C^\infty(\Omega)\cap C(\overline\Omega)$ be an admissible
solution of \eqref{eq:dirichlet}. For $D=\diam\Omega$ and $0<\delta<1$,
\begin{equation}\label{eq:domain-interior}
\sup_{\{d\ge\delta D\}}
\left(\frac{|U|}{D^2}+\frac{|DU|}{D}+|D^2U|\right)
\le C(n,k,\delta),
\end{equation}
whenever the set in the supremum is nonempty.
\end{lemma}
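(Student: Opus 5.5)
By scaling we may assume $D=1$: the function $U_D(x)=D^{-2}U(Dx)$ solves the same problem on $D^{-1}\Omega$. That domain has diameter one and outward curvatures still in $\G_{k-1}^{(n-1)}$, and every term of \eqref{eq:domain-interior} is invariant under this rescaling. So I will show that $|U|+|DU|+|D^2U|\le C(n,k,\delta)$ on $\{d\ge\delta\}$ when $\diam\Omega=1$. The estimate then has three steps: a $C^0$ bound, a gradient bound on a slightly larger interior set, and a Hessian bound of Pogorelov type that uses a cutoff built from $U$ itself.

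\textbf{Step 1 ($C^0$ bound).} Proposition~\ref{prop:boundary} gives $0\le -U\le C_0$ on all of $\Omega$, so $|U|\le C_0$. For the Pogorelov step we also need a quantitative lower bound on $-U$ away from the boundary. Fix $x_0$ with $d(x_0)\ge\delta$ and compare with the quadratic
\[
q(x)=\tfrac12 c_{n,k}\bigl(|x-x_0|^2-\delta^2\bigr),\qquad \sigma_k(c_{n,k}I)=1,
\]
on the ball $B_\delta(x_0)\subset\Omega$. There $U\le 0=q$ on $\partial B_\delta(x_0)$ and $\sigma_k(D^2q)=\sigma_k(D^2U)$, so the comparison principle for admissible solutions gives $U\le q$. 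Hence $-U(x_0)\ge c_{n,k}\delta^2/2=:2\tau$. Consequently the sublevel set $\{U<-\tau\}$ contains $\{d\ge\delta\}$. On the other hand, Proposition~\ref{prop:boundary} gives $-U\le C_0 d^\beta$, so $\{U\le-\tau\}$ lies inside $\{d\ge(\tau/C_0)^{1/\beta}\}$, a set at distance $\delta_1(n,k,\delta)>0$ from $\partial\Omega$.

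\textbf{Step 2 (gradient bound).} On $\Omega_1=\{d>\delta_1/2\}$ we have $|U|\le C_0$, and every point of $\{d\ge\delta_1\}$ has a ball of radius $\delta_1/2$ around it inside $\Omega_1$. The interior gradient estimate of Chou and Wang \cite[Theorem 3.2]{CW} for $k$-admissible solutions, applied on these balls, gives $|DU|\le C(n,k,\delta)$ on $\{d\ge\delta_1\}$. In particular this bound holds on $\{U\le-\tau\}$.

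\textbf{Step 3 (Hessian bound).} Set $\Omega'=\{U<-\tau\}$. Then $U+\tau=0$ on $\partial\Omega'$, and $\overline{\Omega'}$ is compactly contained in $\Omega$, so $U$ is smooth up to $\partial\Omega'$. I apply the Pogorelov-type weighted Hessian estimate of Chou and Wang \cite[Theorem 1.5]{CW} to $U+\tau$ on $\Omega'$. This gives
\[
(-U-\tau)^{1+\epsilon}|D^2U|\le C\bigl(n,k,\epsilon,\sup_{\Omega'}|U|,\sup_{\Omega'}|DU|\bigr)
\]
for a fixed small $\epsilon=\epsilon(n,k)$. The bounds from Steps 1 and 2 control the right-hand side. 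On $\{d\ge\delta\}$ we have $-U-\tau\ge\tau$, so dividing by $\tau^{1+\epsilon}$ gives $|D^2U|\le C(n,k,\delta)$ there.

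The main obstacle is Step 3. The weighted estimate requires a solution vanishing on the boundary of its domain, and its constant must be independent of the geometry of that domain. Choosing the sublevel set $\Omega'$ of $U$ handles both points: $U+\tau$ vanishes on $\partial\Omega'$, and Chou--Wang's constant depends only on $\sup|U|$ and $\sup|DU|$. I would check that their statement needs no smoothness of $\partial\Omega'$. If it does, I would perturb $\tau$ to a regular value by Sard's theorem, which keeps all the constants uniform. The remaining point is the lower bound $-U\ge2\tau$ on $\{d\ge\delta\}$, which is what keeps the weight away from zero. It comes from the comparison with $q$ in Step 1 and holds only because $\sigma_k(D^2U)=1$.
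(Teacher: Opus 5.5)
Your proposal is correct and follows essentially the same route as the paper. Both arguments rescale to $D=1$, use Proposition~\ref{prop:boundary} for the $C^0$ bound and to keep a deep sublevel set $\{U<-s\}$ away from $\partial\Omega$, apply the Chou--Wang interior gradient estimate on balls there and then their weighted Hessian estimate to $U+s$ on that sublevel, and use comparison with a quadratic of Hessian $\binom nk^{-1/k}I$ to get $-U\gtrsim d^2$, which places $\{d\ge\delta\}$ where the weight is bounded below. The paper settles your contingency by choosing $-s$ to be a regular value from the outset, with $s$ in a fixed interval so the constants stay uniform. It also records that this regular sublevel set is $(k-1)$-convex by the restriction identity \eqref{eq:restriction}, a hypothesis you should likewise check when invoking \cite[Theorem 1.5]{CW}.
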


\begin{proof}
Translation and the rescaling $U(Dy)/D^2$ reduce the proof to $D=1$.
The Hessian is unchanged by this scaling, while the gradient is divided
by $D$ and the function by $D^2$.

Proposition~\ref{prop:boundary} then gives
\begin{equation}\label{eq:normalized-boundary}
0\le-U\le C_0d^\beta,\qquad \|U\|_{L^\infty(\Omega)}\le C_0.
\end{equation}
Fix $\eta>0$. If $-U(x)\ge\eta/4$, the boundary estimate forces
$C_0d(x)^\beta\ge\eta/4$. Solving for $d(x)$ gives
\begin{equation}\label{eq:sublevel-separation}
-U(x)\ge\frac\eta4
\quad\Longrightarrow\quad
d(x)\ge r_\eta:=\left(\frac{\eta}{4C_0}\right)^{1/\beta}.
\end{equation}

For the constant right-hand side, all derivatives vanish, so
condition (3.13) of \cite{CW} holds with $h=0$.
Apply their interior gradient estimate \cite[Theorem 3.2]{CW}
on balls of radius $r_\eta/2$ centered in this sublevel.

Together with \eqref{eq:normalized-boundary}, it gives
\begin{equation}\label{eq:sublevel-gradient}
\sup_{\{-U\ge\eta/4\}}|DU|\le C(n,k,\eta).
\end{equation}
The balls lie inside $\Omega$ by \eqref{eq:sublevel-separation}.
Their radius and the oscillation of $U$ depend only on $n,k,\eta$,
which explains the uniform constant in \eqref{eq:sublevel-gradient}.

If the sublevel is empty, the corresponding estimate is vacuous.

Choose $s\in(\eta/4,\eta/2)$ such that $-s$ is a regular value of $U$,
and put
\begin{equation}\label{eq:inner-dirichlet}
\Omega_s=\{U<-s\},\qquad V=U+s.
\end{equation}
The set $\overline{\Omega_s}$ lies in $\{-U\ge\eta/4\}$, so
\eqref{eq:sublevel-separation} places it a positive distance from
$\partial\Omega$. Regularity of the chosen value makes its boundary smooth.

Adding $s$ leaves the Hessian unchanged. Thus $V$ is admissible,
solves the same equation, and vanishes on $\partial\Omega_s$.

The regular level is $(k-1)$-convex, by the tangential Hessian identity
\eqref{eq:restriction} below. Equations \eqref{eq:normalized-boundary}
and \eqref{eq:sublevel-gradient} bound the $C^1$ norm of $V$.

Since $\overline{\Omega_s}\Subset\Omega$, the function $V$ is smooth
on a neighborhood of $\overline{\Omega_s}$. The right-hand side is
identically one, so the remaining hypotheses of the estimate are satisfied.

Apply the zero-boundary estimate of Chou and Wang
\cite[Theorem 1.5]{CW} on each component of $\Omega_s$:
\begin{equation}\label{eq:pogorelov}
(-V)^4|D^2V|\le C(n,k,\eta).
\end{equation}
The constant in their estimate is independent of the domain.
For a point with $-U\ge\eta$, the choice $s<\eta/2$ gives
\[
-V=-U-s\ge\eta-s>\frac\eta2.
\]
Dividing \eqref{eq:pogorelov} by this positive fourth power, and using
$D^2V=D^2U$, gives
\begin{equation}\label{eq:deep-hessian}
\sup_{\{-U\ge\eta\}}|D^2U|\le C(n,k,\eta).
\end{equation}

To express this bound in terms of distance, fix $x\in\Omega$ and
$0<r<d(x)$. On $B_r(x)$ introduce
\begin{equation}\label{eq:interior-quadratic}
q(y)=a_{n,k}(|y-x|^2-r^2),\qquad
 a_{n,k}=\frac1{2\binom nk^{1/k}}.
\end{equation}
All eigenvalues of $D^2q$ equal $2a_{n,k}>0$, and hence
\[
\sigma_k(D^2q)=\binom nk(2a_{n,k})^k=1.
\]
On $\partial B_r(x)$, we have $U\le0=q$.
Both functions are admissible solutions of the same equation, so the
comparison principle gives $U\le q$ throughout the ball.

Evaluate at $x$ and let $r\uparrow d(x)$. This yields
\begin{equation}\label{eq:depth-distance}
-U(x)\ge a_{n,k}d(x)^2.
\end{equation}
Taking $\eta=a_{n,k}\delta^2$ in \eqref{eq:deep-hessian} bounds the
Hessian on $\{d\ge\delta\}$.

The bound for $U$ follows from \eqref{eq:normalized-boundary}.
The gradient bound follows from \eqref{eq:sublevel-gradient}, because
\eqref{eq:depth-distance} places $\{d\ge\delta\}$ inside $\{-U\ge\eta\}$.

Scaling back proves \eqref{eq:domain-interior}.
\end{proof}

\subsection{Select the admissible branch}\label{sec:branch}

Let $u$ satisfy the hypotheses of Theorem~\ref{thm:interior}.
Introduce the rescaled function $u_R(y)=R^{-2}u(Ry)$ on $B_1$.
Its derivatives satisfy
\[
Du_R(y)=R^{-1}Du(Ry),\qquad D^2u_R(y)=D^2u(Ry).
\]
Thus the equation is unchanged, and the lower bound becomes
$u_R(y)\ge b|y|^2-B/R^2$. The hypothesis on $B$ gives
$0\le B/R^2\le b/4$.

It therefore suffices to prove \eqref{eq:interior-estimate} when $R=1$.
In this normalization, $u(0)=0$, $u(x)\ge b|x|^2-B$, and $0\le B\le b/4$.

The growth bound gives
\begin{equation}\label{eq:branch-minimum}
u(0)=0,\qquad
\liminf_{|x|\to1}u(x)\ge b-B\ge\frac{3b}{4}>0.
\end{equation}
Choose $r_0\in(1/2,1)$. The quadratic lower bound is positive on
$B_1\setminus\overline B_{r_0}$, whereas $u(0)=0$.
A minimum on $\overline B_{r_0}$ is therefore also a minimum in $B_1$.

At such a point $x_*\in B_1$, the second derivative test gives
\begin{equation}\label{eq:positive-point}
D^2u(x_*)\ge0.
\end{equation}
Since $\sigma_k(D^2u(x_*))=1$, at least $k$ of its nonnegative
eigenvalues are positive. Hence $\sigma_j(D^2u(x_*))>0$ for
$1\le j\le k$, so $D^2u(x_*)\in\G_k$.

We now use a continuity argument. Fix $x\in B_1$ and follow the
Hessian along the segment from $x_*$ to $x$:
\begin{equation}\label{eq:hessian-path}
H(t)=D^2u\bigl((1-t)x_*+tx\bigr),\qquad 0\le t\le1.
\end{equation}
Define
\begin{equation}\label{eq:continuity-set}
G=\{t\in[0,1]:H(t)\in\G_k\}.
\end{equation}
Since $H(0)\in\G_k$, we have $0\in G$.
If $t_0\in G$, then every number $\sigma_j(H(t_0))$, $1\le j\le k$,
is positive. Continuity preserves these finitely many strict inequalities
for $t$ sufficiently close to $t_0$, so $G$ is relatively open.

To prove closedness, we use Maclaurin's inequalities: for $\lambda\in\G_k$,
\begin{equation}\label{eq:maclaurin}
\left(\frac{\sigma_{s-1}(\lambda)}{\binom{n}{s-1}}\right)^{1/(s-1)}
\ge
\left(\frac{\sigma_s(\lambda)}{\binom{n}{s}}\right)^{1/s},
\qquad 2\le s\le k.
\end{equation}
Iterating these inequalities and using $\sigma_k(H(t))=1$, we obtain
\begin{equation}\label{eq:admissible-lower}
\sigma_j(H(t))\ge
\binom{n}{j}\binom{n}{k}^{-j/k}>0,
\qquad t\in G,\quad 1\le j<k.
\end{equation}
The lower bounds in \eqref{eq:admissible-lower} are independent of $t$.
Let $t_m\in G$ converge to $t\in[0,1]$. Continuity gives
\[
\sigma_j(H(t))=\lim_{m\to\infty}\sigma_j(H(t_m))
\ge\binom nj\binom nk^{-j/k}>0,\qquad 1\le j<k.
\]
The equation also gives $\sigma_k(H(t))=1$.
Thus all defining inequalities of $\G_k$ remain strict at $H(t)$,
which proves $t\in G$ and the relative closedness of $G$.

Since $[0,1]$ is connected and $G$ is nonempty, open, and closed in
$[0,1]$, we conclude that $G=[0,1]$. In particular,
$D^2u(x)=H(1)\in\G_k$.

As $x\in B_1$ was arbitrary, $u$ is admissible throughout the ball.
We can now apply admissible comparison and the interior estimates.

\subsection{Describe the regular sublevel}

We first explain the curvature of an admissible level set.
For a symmetric matrix $H$ and $0\le j\le n-1$, its $j$-th Newton
tensor is the symmetric matrix $T_j(H)$ defined by
\[
D\sigma_{j+1}(H)[K]=\sum_{p,q=1}^n T_j^{pq}(H)K_{pq},\qquad K=K^T.
\]

If $H\in\G_k$ and $E=\nu^\perp$ for a unit vector $\nu$, then
\begin{equation}\label{eq:restriction}
\sigma_j(H|_E)=\langle T_j(H)\nu,\nu\rangle>0,
\qquad 1\le j\le k-1.
\end{equation}
To see the equality, choose orthonormal coordinates with last vector
$\nu$. Differentiating $\sigma_{j+1}(H)$ with respect to $H_{nn}$
leaves the sum of all $j$-dimensional principal minors of $H|_E$.

Since $H\in\G_k\subset\G_{j+1}$, ellipticity makes $T_j(H)$
positive definite. Evaluating its quadratic form on $\nu$ proves
positivity in \eqref{eq:restriction}.

By Sard's theorem, choose a regular value $t\in(b/4,b/2)$.
Since $u(0)=0<t$, the set $\{u<t\}$ has a component $\Omega_t$
containing the origin.

For $x\in\Omega_t$, the lower bound gives $b|x|^2-B\le u(x)<t$.
Using $t<b/2$ and $B\le b/4$, we obtain
\begin{equation}\label{eq:diameter-sublevel}
\Omega_t\subset B_{\sqrt{(t+B)/b}}\subset B_{\sqrt3/2},
\qquad D_t:=\diam\Omega_t\le\sqrt3.
\end{equation}
Its boundary is smooth and compactly contained in $B_1$.
The outward normal and second fundamental form are
\begin{equation}\label{eq:second-fundamental}
\nu=\frac{Du}{|Du|},\qquad
\mathrm{II}_{\partial\Omega_t}
=\frac{D^2u|_{T\partial\Omega_t}}{|Du|}.
\end{equation}
The second identity follows by differentiating $\nu=Du/|Du|$
in tangent directions. The term from differentiating $|Du|^{-1}$
vanishes when paired with a tangent vector.

Equation \eqref{eq:restriction} places the tangential Hessian in
$\G_{k-1}$. Division by the positive number $|Du|$ preserves this cone,
so the principal curvatures belong to $\G_{k-1}$.

The function $U=u-t$ is admissible, satisfies $\sigma_k(D^2U)=1$,
and vanishes on $\partial\Omega_t$. It therefore satisfies the hypotheses of
Proposition~\ref{prop:boundary} and Lemma~\ref{lem:domain-interior}.

\subsection{Apply the domain estimate}

Set $d_t(x)=\dist(x,\partial\Omega_t)$. At the origin, the boundary
estimate and \eqref{eq:diameter-sublevel} imply
\begin{equation}\label{eq:depth-sublevel}
\frac b4<t=-U(0)
\le C_0D_t^{2-\beta}d_t(0)^\beta
\le C_0\,3^{1-\beta/2}d_t(0)^\beta.
\end{equation}
Define constants depending only on $n,k,b$ by
\begin{equation}\label{eq:rho-definition}
\gamma=\min\left\{\frac14,
\left(\frac{b}{4C_0\,3^{1-\beta/2}}\right)^{1/\beta}\right\},
\qquad \rho=\frac\gamma2.
\end{equation}
Then $d_t(0)\ge\gamma$, so $B_\gamma\subset\Omega_t$.
Indeed, a segment from the origin to a point outside $\Omega_t$ must
first meet $\partial\Omega_t$, at distance at least $d_t(0)$.

Distance to a fixed closed set is $1$-Lipschitz.
Consequently, for $x\in B_\rho$ we have
\begin{equation}\label{eq:relative-interior}
d_t(x)\ge d_t(0)-|x|\ge\rho\ge\frac\rho{\sqrt3}D_t.
\end{equation}

Apply Lemma~\ref{lem:domain-interior} with $\delta=\rho/\sqrt3$.
Since $D_t\le\sqrt3$ and $u=U+t$ with $t<b/2$, we obtain
\begin{equation}\label{eq:unit-interior}
\sup_{B_\rho}\bigl(|u|+|Du|+|D^2u|\bigr)\le C(n,k,b).
\end{equation}
Apply \eqref{eq:unit-interior} to $u_R$ and use the derivative identities
from Section~\ref{sec:branch}. The three terms become
$R^{-2}|u|$, $R^{-1}|Du|$, and $|D^2u|$ on $B_{\rho R}$,
which proves Theorem~\ref{thm:interior}.
\qed

\section{Proof of the Liouville theorem}\label{sec:liouville}

We apply Theorem~\ref{thm:interior} on expanding balls. The resulting
Hessian bound makes the equation uniformly elliptic. The Evans--Krylov
estimate then forces the Hessian to be constant.

\subsection{Obtain a global Hessian bound}

Let $u$ satisfy the hypotheses of Theorem~\ref{thm:main}. Normalize its
value at the origin by setting
\begin{equation}\label{eq:global-normalization}
v=u-u(0),\qquad \widetilde B=B+u(0)\ge0.
\end{equation}
The last inequality follows from \eqref{eq:liouville-growth} at $x=0$.
For every sufficiently large $R$, we have
\begin{equation}\label{eq:normalized-global-growth}
v(x)\ge b|x|^2-\widetilde B\quad(x\in B_R),\qquad
0\le\widetilde B\le\frac{bR^2}{4}.
\end{equation}
Theorem~\ref{thm:interior} therefore gives
\begin{equation}\label{eq:global-hessian}
\sup_{B_{\rho R}}|D^2u|\le C(n,k,b),\qquad
\sup_{\R^n}|D^2u|\le M:=C(n,k,b).
\end{equation}
For each fixed $x\in\R^n$, choose $R$ large enough that
$\widetilde B\le bR^2/4$ and $|x|<\rho R$.
The first estimate then bounds $|D^2u(x)|$ by the same constant $M$.

The branch argument in Section~\ref{sec:branch} applies on each of
these balls. Since the balls exhaust $\R^n$, it also gives
$D^2u\in\G_k$ everywhere.

\subsection{Rescale the uniformly elliptic equation}

Set $F(H)=\sigma_k(H)^{1/k}$ and introduce the set containing all
Hessian values:
\begin{equation}\label{eq:compact-range}
\mathcal K=\{H\in\clG_k:\ \sigma_k(H)=1,\ |H|\le M\}.
\end{equation}
The Maclaurin inequalities \eqref{eq:maclaurin} extend by continuity
to $\clG_k$. Since $\sigma_k(H)=1$ on $\mathcal K$, they give
$\sigma_j(H)>0$ for every $H\in\mathcal K$ and $1\le j<k$.

Thus $\mathcal K$ is a compact subset of $\G_k$.
The derivative matrix is
\[
F^{ij}(H)=\frac1k\sigma_k(H)^{1/k-1}T_{k-1}^{ij}(H).
\]
It is positive definite on $\G_k$ and continuous on $\mathcal K$.
Compactness therefore gives constants $0<\lambda\le\Lambda$, depending
only on $n,k,M$, such that $\lambda I\le(F^{ij}(H))\le\Lambda I$.

For completeness, extend the operator to all symmetric matrices by
\begin{equation}\label{eq:operator-extension}
\widetilde F(H)=\inf_{A\in\mathcal K}
\left\{1+\sum_{i,j}F^{ij}(A)(H_{ij}-A_{ij})\right\}.
\end{equation}
The matrices $A$ and $(F^{ij}(A))$ are bounded on $\mathcal K$,
so this infimum is finite for every $H$.
An infimum of affine functions is concave.

For every positive-semidefinite matrix $P$, each affine function in
\eqref{eq:operator-extension} increases by a number between
$\lambda\tr P$ and $\Lambda\tr P$.

Taking infima preserves these bounds, so
\[
\lambda\tr P\le\widetilde F(H+P)-\widetilde F(H)
\le\Lambda\tr P,\qquad P\ge0.
\]
Thus $\widetilde F$ is uniformly elliptic on all symmetric matrices.
For $H,A\in\mathcal K$, concavity of $F$ gives
\[
1=F(H)\le F(A)+\sum_{i,j}F^{ij}(A)(H_{ij}-A_{ij}).
\]
Every expression in the infimum is therefore at least one.
Choosing $A=H$ attains one, which proves
$\widetilde F(H)=1$ on $\mathcal K$.

Subtract the affine Taylor polynomial at the origin and rescale:
\begin{equation}\label{eq:affine-rescaling}
w_R(y)=\frac{u(Ry)-u(0)-RDu(0)\cdot y}{R^2},
\qquad y\in B_1.
\end{equation}
The normalization gives $w_R(0)=0$ and $Dw_R(0)=0$.
Also, $D^2w_R(y)=D^2u(Ry)$, so \eqref{eq:global-hessian} bounds
its Hessian independently of $R$.

Integrating along the segment from $0$ to $y$ yields
\[
|Dw_R(y)|\le C(n)M|y|,\qquad
|w_R(y)|\le C(n)M|y|^2.
\]
Hence $\|w_R\|_{C^{1,1}(B_1)}\le C(n)M$.
Its Hessian takes values in $\mathcal K$, and therefore
$\widetilde F(D^2w_R)=1$.

To use the convex formulation of \cite[Theorem 1]{EK}, consider
$-w_R$ and the operator $H\mapsto1-\widetilde F(-H)$.
This operator is convex and uniformly elliptic.

Its value on $D^2(-w_R)$ is zero. The cited Evans--Krylov estimate
therefore gives constants $\zeta\in(0,1)$ and $C>0$, independent of $R$,
such that
\begin{equation}\label{eq:ek-rescaled}
[D^2w_R]_{C^\zeta(B_{1/2})}\le C,
\qquad [D^2u]_{C^\zeta(B_{R/2})}\le CR^{-\zeta}.
\end{equation}
For fixed $x,y\in\R^n$, both points lie in $B_{R/2}$ for all
sufficiently large $R$. The second estimate then gives
\[
|D^2u(x)-D^2u(y)|\le CR^{-\zeta}|x-y|^\zeta.
\]
Letting $R\to\infty$ makes the right-hand side zero.
Thus $D^2u(x)=D^2u(y)$ for every pair of points.

Write this constant Hessian as $A$. Integration gives
$u(x)=\tfrac12x^TAx+\ell\cdot x+c$, and the equation gives
$\sigma_k(A)=1$.

Finally, take $x=re$ in \eqref{eq:liouville-growth}, where $|e|=1$.
Substitution of the quadratic expression and division by $r^2$ give
\[
\frac12e^TAe+\frac{\ell\cdot e}{r}+\frac c{r^2}
\ge b-\frac B{r^2}.
\]
Letting $r\to\infty$ removes the affine and constant terms, yielding
\begin{equation}\label{eq:matrix-lower}
\tfrac12e^TAe\ge b.
\end{equation}
Hence $A\ge2bI$, proving Theorem~\ref{thm:main}.
\qed

\section{Viscosity solutions}\label{sec:viscosity}

We specify the viscosity formulation and prove automatic admissibility.
Approximation by smooth solutions then gives the interior estimate in
Theorem~\ref{thm:viscosity-interior} and the Liouville theorem in
Theorem~\ref{thm:viscosity-liouville}.

\subsection{The viscosity formulation}\label{sec:viscosity-formulation}

Let $\Omega\subset\R^n$ be open. A proper upper semicontinuous function
$u:\Omega\to[-\infty,\infty)$ is $k$-convex if every $C^2$ function
touching $u$ from above has Hessian in $\clG_k$ at the contact point.

Here proper means that $u$ is not identically $-\infty$ on any component.
We denote this class by $\Phi_k(\Omega)$ and use its upper semicontinuous
representatives throughout. For weak functions, the admissibility
condition means membership in $\Phi_k(\Omega)$.

A continuous function $u$ is a viscosity solution of
$\sigma_k(D^2u)=1$ if every $C^2$ upper touching test $\phi$ satisfies
\begin{equation}\label{eq:viscosity-upper}
\sigma_k(D^2\phi(x))\ge1,
\end{equation}
and every $C^2$ lower touching test $\phi$ with $D^2\phi(x)\in\clG_k$ satisfies
\begin{equation}\label{eq:viscosity-lower}
\sigma_k(D^2\phi(x))\le1.
\end{equation}
Touching from above means that $u-\phi$ has a local maximum zero at
the contact point; touching from below means a local minimum zero.
The lower tests are taken on the elliptic branch.

The following lemma shows that the upper-test inequality already forces
admissibility, giving the usual formulation in \cite[Section 2]{TWII}.

\begin{lemma}[Automatic viscosity admissibility]\label{lem:viscosity-admissibility}
Let $u\in C(\Omega)$ satisfy \eqref{eq:viscosity-upper} for every
$C^2$ upper touching test. Then every such test has Hessian in $\G_k$
at its contact point. In particular, $u\in\Phi_k(\Omega)$.
\end{lemma}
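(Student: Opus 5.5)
The plan is to exploit the fact that the upper-test inequality \eqref{eq:viscosity-upper} is stable under adding positive-semidefinite quadratics to the test function. We then rerun the continuity argument of Section~\ref{sec:branch} along a straight path of matrices.

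Fix a $C^2$ function $\phi$ touching $u$ from above at $x_0$, and set $A=D^2\phi(x_0)$. For any $t\ge0$, the function
\[
\phi_t(y)=\phi(y)+\tfrac t2|y-x_0|^2
\]
satisfies $\phi_t\ge\phi\ge u$ near $x_0$, with equality at $x_0$. So $\phi_t$ is again an upper touching test, and \eqref{eq:viscosity-upper} gives
\[
\sigma_k(A+tI)\ge1\qquad\text{for every }t\ge0.
\]
It remains to show purely algebraically that this forces $A\in\G_k$.

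Choose $T>|A|$. Then $A+TI$ is positive definite, so $A+TI\in\G_k$. Define
\[
G=\{t\in[0,T]:A+tI\in\G_k\},
\]
which contains $T$. Openness of $G$ in $[0,T]$ follows because the finitely many strict inequalities $\sigma_j(A+tI)>0$, $1\le j\le k$, persist under small changes of $t$.

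For closedness I use the Maclaurin inequalities \eqref{eq:maclaurin}, exactly as in \eqref{eq:admissible-lower}. The only change is that $\sigma_k=1$ is replaced by $\sigma_k(A+tI)\ge1$. For $t\in G$, iterating \eqref{eq:maclaurin} gives
\[
\sigma_j(A+tI)\ge\binom nj\binom nk^{-j/k}>0,\qquad 1\le j<k.
\]
This lower bound is independent of $t$. Hence a limit $t$ of points $t_m\in G$ satisfies the same positive lower bounds for $j<k$, together with $\sigma_k(A+tI)\ge1$, so $t\in G$. Connectedness of $[0,T]$ then gives $G=[0,T]$, and in particular $A=A+0\cdot I\in\G_k$.

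Since $\G_k\subset\clG_k$, every upper touching test has Hessian in $\clG_k$, which is the definition of $u\in\Phi_k(\Omega)$. Properness holds trivially because $u$ is continuous and real-valued.

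The only delicate point is the closedness step. Without the uniform lower bound from Maclaurin's inequalities, the path $A+tI$ could in principle reach $\partial\G_k$ while $\sigma_k$ stays positive, as happens on other branches of $\{\sigma_k>0\}$. Carrying over the uniform bound \eqref{eq:admissible-lower} with $\sigma_k\ge1$ in place of $\sigma_k=1$ resolves this. Alternatively, one could cite G\aa rding's theorem that $\G_k$ is the connected component of $\{\sigma_k>0\}$ containing $I$, since the path $t\mapsto A+tI$ lies in $\{\sigma_k\ge1\}$.
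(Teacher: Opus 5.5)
Your proposal is correct and follows the paper's proof essentially step by step. It uses the same tests $\phi+\tfrac t2|x-x_0|^2$ to obtain $\sigma_k(A+tI)\ge1$, the same continuity set in $[0,T]$ starting from a positive definite endpoint, openness from the strict inequalities, and closedness from the $t$-independent Maclaurin lower bounds.
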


\begin{proof}
Fix an upper touching test $\phi$ at $x_0$, and put $A=D^2\phi(x_0)$.
Choose $T>0$ such that $A+TI>0$, and define
\begin{equation}\label{eq:viscosity-continuity-set}
G=\{t\in[0,T]:A+tI\in\G_k\}.
\end{equation}
The matrix $A+TI$ is positive definite, so $T\in G$.
For $t_0\in G$, continuity keeps all the finitely many inequalities
$\sigma_j(A+tI)>0$ valid when $t$ is sufficiently close to $t_0$.
Thus $G$ is relatively open in $[0,T]$.

For every $t\in[0,T]$, introduce
\[
\phi_t(x)=\phi(x)+\tfrac t2|x-x_0|^2.
\]
The added term is nonnegative and vanishes at $x_0$, so $\phi_t$
still touches $u$ from above. Applying \eqref{eq:viscosity-upper} to
this test gives $\sigma_k(A+tI)\ge1$ for the whole interval.

For $t\in G$, Maclaurin's inequality \eqref{eq:maclaurin} yields
\begin{equation}\label{eq:viscosity-maclaurin}
\sigma_j(A+tI)\ge
\binom{n}{j}\binom{n}{k}^{-j/k}>0,
\qquad 1\le j<k.
\end{equation}
To prove closedness, take $t_m\in G$ with $t_m\to t\in[0,T]$.
Continuity of each symmetric polynomial preserves the positive
lower bounds in \eqref{eq:viscosity-maclaurin}.

Together with $\sigma_k(A+tI)\ge1$, these bounds imply
$A+tI\in\G_k$. Hence $t\in G$, and $G$ is relatively closed.

Connectedness gives $G=[0,T]$, so $A\in\G_k$.
Since the upper touching test was arbitrary, $u\in\Phi_k(\Omega)$.
\end{proof}

This argument does not use the growth condition.
For the classical theorems, admissibility follows instead from the
growth condition and continuity of the Hessian, as in Section~\ref{sec:branch}.

\subsection{A margin in the smooth estimate}

We need a small margin in the growth condition to accommodate the error
in approximation. The proof of Theorem~\ref{thm:interior} gives the following
variant with constants depending on the same parameters.

\begin{lemma}\label{lem:margin}
Let $u\in C^\infty(B_R)$ satisfy $\sigma_k(D^2u)=1$ and $u(0)=0$.
Suppose $b>0$ and
\begin{equation}\label{eq:margin-growth}
u(x)\ge b|x|^2-B,\qquad 0\le B\le\frac{bR^2}{2}.
\end{equation}
There are constants $\eta=\eta(n,k,b)\in(0,1)$ and $C=C(n,k,b)$ such that
\begin{equation}\label{eq:margin-estimate}
\sup_{B_{\eta R}}\left(\frac{|u|}{R^2}+
\frac{|Du|}{R}+|D^2u|\right)\le C.
\end{equation}
\end{lemma}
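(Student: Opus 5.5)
We follow the proof of Theorem~\ref{thm:interior} line by line and only re-tune the constants that used $B\le bR^2/4$. As there, the rescaling $u_R(y)=R^{-2}u(Ry)$ reduces everything to $R=1$. Then $u(0)=0$, $u\ge b|x|^2-B$ on $B_1$, and $0\le B\le b/2$.

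\textbf{Admissibility.} We now have $\liminf_{|x|\to1}u\ge b-B\ge b/2>0=u(0)$. Hence $u$ still attains an interior minimum in $B_1$, where $D^2u\ge0$. The continuity argument of Section~\ref{sec:branch}, which is based on Maclaurin's inequalities \eqref{eq:maclaurin}, then applies verbatim and gives $D^2u\in\G_k$ on $B_1$.

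\textbf{Sublevel geometry.} This is the one step where the margin matters. Choose by Sard a regular value $t\in(b/8,b/4)$ in place of $(b/4,b/2)$. Let $\Omega_t$ be the component of $\{u<t\}$ containing the origin. For $x\in\Omega_t$,
\[
b|x|^2< t+B< \tfrac{b}{4}+\tfrac b2=\tfrac{3b}4,
\]
so $\Omega_t\subset B_{\sqrt3/2}$ and $D_t\le\sqrt3$, exactly as in \eqref{eq:diameter-sublevel}. In particular $\partial\Omega_t$ is compactly contained in $B_1$ and smooth. By \eqref{eq:restriction} and \eqref{eq:second-fundamental}, its outward principal curvatures lie in $\G^{(n-1)}_{k-1}$. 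The function $U=u-t$ is therefore admissible, solves $\sigma_k(D^2U)=1$, and vanishes on $\partial\Omega_t$. So Proposition~\ref{prop:boundary} and Lemma~\ref{lem:domain-interior} apply to it.

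\textbf{Depth and conclusion.} At the origin, Proposition~\ref{prop:boundary} gives
\[
\tfrac b8<t=-U(0)\le C_0\,3^{1-\beta/2}d_t(0)^\beta.
\]
Set
\[
\gamma=\min\Bigl\{\tfrac14,\bigl(b/(8C_0 3^{1-\beta/2})\bigr)^{1/\beta}\Bigr\},\qquad \eta=\gamma/2 .
\]
Then $B_\gamma\subset\Omega_t$, and for $x\in B_\eta$ we have $d_t(x)\ge\eta\ge(\eta/\sqrt3)D_t$. Lemma~\ref{lem:domain-interior} with $\delta=\eta/\sqrt3$ bounds $|U|$, $|DU|$ and $|D^2U|$ on $B_\eta$ by $C(n,k,b)$. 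Since $u=U+t$ with $t<b/4$, the same bound holds for $u$. Scaling back yields \eqref{eq:margin-estimate}.

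I do not expect a genuine obstacle. The only points needing care are that $t$ and $B$ together stay below $b$, so the sublevel component still lies in $B_1$ with diameter at most $\sqrt3$, and that the lower bound $t>b/8$ remains a fixed fraction of $b$. These determine $\gamma$, $\eta$ and $C$ in terms of $n,k,b$ alone.
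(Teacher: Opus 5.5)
Your proposal is correct and essentially reproduces the paper's proof. It uses the same rescaling, the same branch-selection argument, the same regular value $t\in(b/8,b/4)$ giving $\Omega_t\subset B_{\sqrt3/2}$, and the same choices $\gamma=\min\{\tfrac14,(b/(8C_0\,3^{1-\beta/2}))^{1/\beta}\}$, $\eta=\gamma/2$, $\delta=\eta/\sqrt3$. The only cosmetic difference is that you obtain the interior minimum from the $\liminf$ bound $b-B\ge b/2$ instead of fixing $r_0\in(1/\sqrt2,1)$ explicitly, which amounts to the same thing.
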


\begin{proof}
Rescale to $R=1$ and choose $r_0\in(1/\sqrt2,1)$.
For $r_0\le|x|<1$, the lower bound gives
\[
u(x)\ge br_0^2-B\ge b(r_0^2-1/2)>0=u(0).
\]
Hence a minimum on $\overline{B_{r_0}}$ is an interior minimum in $B_1$.
The continuity argument in Section~\ref{sec:branch} then gives
$D^2u\in\G_k$ throughout $B_1$.

Choose a regular value $t\in(b/8,b/4)$, and let $\Omega_t$ be the
component of $\{u<t\}$ containing the origin. Then
\begin{equation}\label{eq:margin-sublevel}
\Omega_t\subset B_{\sqrt{(t+B)/b}}\subset B_{\sqrt3/2},\qquad
\diam\Omega_t\le\sqrt3.
\end{equation}
As in Section~\ref{sec:interior}, its boundary is $(k-1)$-convex.
For $U=u-t$ and $d_t(x)=\dist(x,\partial\Omega_t)$,
Proposition~\ref{prop:boundary} gives
\begin{equation}\label{eq:margin-depth}
\frac b8<t=-U(0)\le C_0\,3^{1-\beta/2}d_t(0)^\beta.
\end{equation}
Choose
\[
\gamma_0=\min\left\{\frac14,
\left(\frac{b}{8C_0\,3^{1-\beta/2}}\right)^{1/\beta}\right\},
\qquad \eta=\frac{\gamma_0}{2}.
\]
Then \eqref{eq:margin-depth} gives $d_t(0)\ge\gamma_0$.
The segment argument following \eqref{eq:rho-definition} gives
$B_{\gamma_0}\subset\Omega_t$.

Thus $B_\eta\subset\Omega_t$. Since distance is $1$-Lipschitz and
$\eta=\gamma_0/2$, every $x\in B_\eta$ satisfies $d_t(x)\ge\eta$.

Writing $D_t=\diam\Omega_t\le\sqrt3$, we therefore have
$d_t(x)\ge(\eta/\sqrt3)D_t$ on $B_\eta$.
Lemma~\ref{lem:domain-interior} applies with $\delta=\eta/\sqrt3$.

Finally, $u=U+t$ and $t<b/4$, so the resulting bounds for $U$, $DU$,
and $D^2U$ give \eqref{eq:margin-estimate} when $R=1$.
The change of variables $x=Ry$ restores the stated powers of $R$.
\end{proof}

\subsection{Approximation by solutions of the equation}

\begin{proof}[Proof of Theorem~\ref{thm:viscosity-interior}]
By Lemma~\ref{lem:viscosity-admissibility}, the continuous viscosity
solution $u$ is admissible.

Rescale to $R=1$ and set $r=\sqrt3/2$.
The smaller ball has closure inside $B_1$, so $u$ is continuous on
$\overline{B_r}$ and its boundary values can be approximated uniformly.

Choose smooth functions $\varphi_m$ on $\partial B_r$ with
\begin{equation}\label{eq:weak-boundary-approximation}
\varepsilon_m:=\|\varphi_m-u\|_{L^\infty(\partial B_r)}\longrightarrow0.
\end{equation}
The sphere $\partial B_r$ is connected, and its principal curvatures
are all $1/r$. Consequently,
\[
\sigma_{k-1}(\kappa)=\binom{n-1}{k-1}r^{1-k}>0.
\]
The smooth Dirichlet theorem of Caffarelli, Nirenberg, and Spruck
\cite[Theorem 3]{CNS} therefore supplies admissible solutions, smooth
up to the boundary, satisfying
\begin{equation}\label{eq:weak-exact-approximation}
\sigma_k(D^2u_m)=1\quad\text{in }B_r,\qquad
u_m=\varphi_m\quad\text{on }\partial B_r.
\end{equation}
Maclaurin's inequality \eqref{eq:maclaurin} bounds every lower symmetric
polynomial of $D^2u_m$ away from zero. Continuity preserves these bounds
on $\partial B_r$.

For fixed $m$, it follows that $D^2u_m(\overline{B_r})\Subset\G_k$.
Thus there exists $\delta_m>0$ such that perturbations by $\pm2\delta I$
preserve admissibility whenever $0<\delta<\delta_m$.

We explain the comparison that transfers the boundary error into the ball.
Define
\[
q_m^+=u_m+\varepsilon_m-\delta(|x|^2-r^2),
\qquad q_m^-=u_m-\varepsilon_m+\delta(|x|^2-r^2).
\]
Take $0<\delta<\delta_m$. For any symmetric matrix $A$,
\[
\frac{d}{dt}\sigma_k(A+tI)
=(n-k+1)\sigma_{k-1}(A+tI)>0
\quad\text{when }A+tI\in\G_k.
\]
Apply this identity on the admissible segments from $D^2u_m$ to
$D^2u_m\pm2\delta I$. Since $\sigma_k(D^2u_m)=1$, it gives
$\sigma_k(D^2q_m^+)<1<\sigma_k(D^2q_m^-)$.

The boundary approximation gives $q_m^-\le u\le q_m^+$ on
$\partial B_r$. If $u-q_m^+$ had a positive interior maximum,
a constant translate of $q_m^+$ would violate the upper-test inequality.

Similarly, a positive interior maximum of $q_m^--u$ would provide an
admissible lower touching test with $\sigma_k>1$, contradicting
\eqref{eq:viscosity-lower}. Hence $q_m^-\le u\le q_m^+$ in the ball.

Letting $\delta\downarrow0$ proves
\begin{equation}\label{eq:weak-uniform-convergence}
\|u_m-u\|_{L^\infty(B_r)}\le\varepsilon_m.
\end{equation}

Normalize at the center by putting $v_m=u_m-u_m(0)$.
The equation is unchanged by subtraction of this constant.
Since $u(0)=0$, we have $|u_m(0)|\le\varepsilon_m$, and hence
$\|v_m-u\|_{L^\infty(B_r)}\le2\varepsilon_m$.

Combining this estimate with the lower bound for $u$, we obtain
\begin{equation}\label{eq:weak-growth-error}
v_m(x)\ge b|x|^2-B-2\varepsilon_m,\qquad
B+2\varepsilon_m\le\frac{br^2}{2}=\frac{3b}{8}
\end{equation}
for all sufficiently large $m$: the assumption $B\le b/4$ leaves
a margin $3b/8-b/4=b/8$, which absorbs $2\varepsilon_m$.
Lemma~\ref{lem:margin}, applied on $B_r$, now gives
\begin{equation}\label{eq:weak-uniform-c2}
\sup_{B_{\eta r}}\bigl(|v_m|+|Dv_m|+|D^2v_m|\bigr)\le C(n,k,b).
\end{equation}

The Hessians lie in a common compact subset of $\G_k$, by
\eqref{eq:maclaurin} and $\sigma_k(D^2v_m)=1$.
The construction in Section~\ref{sec:liouville} gives a common concave,
uniformly elliptic extension of $\sigma_k^{1/k}$ on this Hessian range.

The Evans--Krylov estimate gives a uniform $C^{2,\zeta}$ bound on
$B_{\eta r/2}$ for some $\zeta>0$. Compactness gives a subsequence
converging in $C^2$ on every compact subset of this ball.

Its limit is $u$, because $v_m\to u$ uniformly.
Continuity of $\sigma_k$ therefore gives $\sigma_k(D^2u)=1$
classically on $B_{\eta r/2}$.

Pass to the limit at each point of this smaller ball in
\eqref{eq:weak-uniform-c2}, and then take the supremum. This gives
\[
\sup_{B_{\eta r/2}}\bigl(|u|+|Du|+|D^2u|\bigr)\le C(n,k,b).
\]

For smoothness, differentiate the equation of the smooth approximants.
With $F=\sigma_k^{1/k}$, each coordinate derivative satisfies
\[
\sum_{i,j}F^{ij}(D^2v_m)\,\partial_{ij}(\partial_\ell v_m)=0,
\qquad 1\le\ell\le n.
\]
The coefficients have uniform positive ellipticity bounds and uniform
$C^{0,\zeta}$ norms on smaller balls. Interior Schauder estimates
therefore give uniform $C^{3,\zeta}$ bounds for $v_m$ there.

Iterating on compactly contained balls gives bounds of every order,
so $u\in C^\infty(B_{\eta r/2})$. Rescaling gives
\eqref{eq:viscosity-interior-estimate} with $\rho=\eta r/2$.
\end{proof}

\subsection{The Liouville theorem}

\begin{proof}[Proof of Theorem~\ref{thm:viscosity-liouville}]
Let $u$ be the given entire viscosity solution. Normalize by
$v=u-u(0)$ and put $\widetilde B=B+u(0)\ge0$.
For all sufficiently large $R$, we have
\begin{equation}\label{eq:weak-global-growth}
v(x)\ge b|x|^2-\widetilde B,\qquad
0\le\widetilde B\le\frac{bR^2}{4}.
\end{equation}
Theorem~\ref{thm:viscosity-interior} gives
$u\in C^\infty(B_{\rho R})$, where $\rho>0$ depends only on $n,k,b$.
These balls exhaust $\R^n$, so $u\in C^\infty(\R^n)$.

The quadratic lower bound is unchanged.
Theorem~\ref{thm:main} therefore gives \eqref{eq:viscosity-conclusion}.
\end{proof}

\section{Admissible weak solutions}\label{sec:ibp}

We use Trudinger's definition of admissible weak solutions by approximation.
His viscosity characterization gives the two weak-solution theorems.
We then explain the integration-by-parts identities satisfied by these solutions.

\subsection{Trudinger's definition}\label{sec:ibp-formulation}

Following \cite[p.~1252, (1.4)]{Tr97}, a function $u\in C(\Omega)$ is an
admissible weak solution of $\sigma_k(D^2u)=1$ if there exist
$u_m\in C^2(\Omega)$ satisfying
\begin{equation}\label{eq:weak-approximation}
\begin{aligned}
D^2u_m&\in\clG_k\quad\text{in }\Omega,\qquad
u_m\to u\quad\text{locally uniformly},\\
\sigma_k(D^2u_m)&\to1\quad\text{in }L^1_{\mathrm{loc}}(\Omega).
\end{aligned}
\end{equation}
The approximants are $k$-admissible in the weak sense, including points
where their Hessians lie on the boundary of $\G_k$.
Their locally uniform limit belongs to $\Phi_k(\Omega)$.

\subsection{The Liouville theorem and the interior estimate}

For a continuous right-hand side, Trudinger characterizes admissible
weak solutions by the viscosity inequalities \cite[p.~1255]{Tr97}.
We apply this characterization with right-hand side equal to one.

\begin{proof}[Proof of Theorem~\ref{thm:ibp-liouville}]
The function $u$ is continuous by definition.
Trudinger's characterization makes it a viscosity solution of
$\sigma_k(D^2u)=1$ throughout $\R^n$.

The lower bound is unchanged, so all hypotheses of
Theorem~\ref{thm:viscosity-liouville} hold.
That theorem gives \eqref{eq:ibp-conclusion}.
\end{proof}

\begin{proof}[Proof of Theorem~\ref{thm:ibp-interior}]
Trudinger's characterization makes $u$ a continuous viscosity solution
on $B_R$. The value $u(0)=0$ and the lower bound with
$0\le B\le bR^2/4$ are unchanged.

Apply Theorem~\ref{thm:viscosity-interior} to obtain
$u\in C^\infty(B_{\rho R})$ and \eqref{eq:ibp-interior-estimate}.
Its constants depend only on $n,k,b$.
\end{proof}

\subsection{Integration by parts and Hessian measures}

For smooth $u$, write $T_{k-1}^{ij}(D^2u)$ for the entries of the Newton
tensor $D\sigma_k(D^2u)$, as in Section~\ref{sec:interior}.
Its divergence-free property and the homogeneity of $\sigma_k$ give
\begin{equation}\label{eq:newton-divergence}
\sum_i\partial_i T_{k-1}^{ij}(D^2u)=0,\qquad
\sum_{i,j}T_{k-1}^{ij}(D^2u)u_{ij}=k\sigma_k(D^2u).
\end{equation}
The second identity follows by differentiating
$\sigma_k(tH)=t^k\sigma_k(H)$ at $t=1$. The first identity removes the
derivative of the Newton tensor in the product rule, giving
\[
\sum_{i,j}\partial_i\bigl(T_{k-1}^{ij}(D^2u)u_j\bigr)
=k\sigma_k(D^2u).
\]

Multiply by $\phi\in C_c^\infty(\Omega)$ and integrate first in $x_i$.
Then integrate in $x_j$; symmetry of the Newton tensor gives the
corresponding divergence identity in that index. Thus
\begin{equation}\label{eq:newton-ibp}
\begin{aligned}
\int_\Omega\phi\,\sigma_k(D^2u)\,dx
&=-\frac1k\sum_{i,j}\int_\Omega \phi_i T_{k-1}^{ij}(D^2u)u_j\,dx\\
&=\frac1k\sum_{i,j}\int_\Omega u T_{k-1}^{ij}(D^2u)\phi_{ij}\,dx.
\end{aligned}
\end{equation}
Both boundary terms vanish because $\phi$ has compact support.
The divergence identities and their mixed versions are discussed in
\cite[Section 2]{TWIII}.

The identity extends to $C^2$ functions by local smooth approximation.
Apply it to each $u_m$ in \eqref{eq:weak-approximation}.
The convergence of the Hessian densities in $L^1_{\mathrm{loc}}$ yields
\begin{equation}\label{eq:weak-ibp}
\frac1k\lim_{m\to\infty}
\sum_{i,j}\int_\Omega
u_m T_{k-1}^{ij}(D^2u_m)\phi_{ij}\,dx
=\int_\Omega\phi\,dx,
\qquad \phi\in C_c^\infty(\Omega).
\end{equation}
Only the contracted integral is passed to the limit.
No pointwise second derivatives of $u$ or separate limits of the
products $u_mT_{k-1}^{ij}(D^2u_m)$ are needed.

For $k=2$, the Newton tensor has the explicit form
$T_1^{ij}(D^2v)=\Delta v\,\delta_{ij}-v_{ij}$.
For smooth $v$, integrate the two terms separately:
\[
\begin{aligned}
\int v\Delta v\,\Delta\phi
&=-\int |Dv|^2\Delta\phi-\int vDv\cdot D\Delta\phi,\\
-\sum_{i,j}\int vv_{ij}\phi_{ij}
&=\sum_{i,j}\int v_i v_j\phi_{ij}+\int vDv\cdot D\Delta\phi.
\end{aligned}
\]
The last terms cancel. Dividing the resulting identity by two gives
\begin{equation}\label{eq:ibp-two-gradient}
\frac12\sum_{i,j}\int vT_1^{ij}(D^2v)\phi_{ij}
=\frac12\sum_{i,j}\int v_i v_j\phi_{ij}
-\frac12\int|Dv|^2\Delta\phi.
\end{equation}
If $v\in W^{1,2}_{\mathrm{loc}}$, ordinary mollifications converge strongly
in this space. Their quadratic gradient products consequently converge
in $L^1_{\mathrm{loc}}$, by the Cauchy--Schwarz inequality.

Thus the right-hand side is the pairing
$\langle\mathcal S_2[v],\phi\rangle$ used in
Remark~\ref{rem:weak-counterexample}.

We also use the Hessian measure associated with
$u\in\Phi_k(\Omega)$ by Trudinger and Wang \cite[Theorem 1.1]{TWII}.
Its classical density and weak continuity are
\begin{equation}\label{eq:hessian-measure-definition}
\begin{aligned}
\mu_k[u]&=\sigma_k(D^2u)\,dx\quad\text{if }u\in C^2(\Omega),\\
\mu_k[u_m]&\rightharpoonup\mu_k[u]
\quad\text{if }u_m\to u\text{ in }L^1_{\mathrm{loc}}(\Omega).
\end{aligned}
\end{equation}
Here all functions in the convergence statement belong to $\Phi_k(\Omega)$.
Weak convergence is tested against continuous compactly supported functions.

For the approximants in \eqref{eq:weak-approximation}, the densities also
converge to one in $L^1_{\mathrm{loc}}$. Hence every admissible weak
solution in Trudinger's sense satisfies
\[
\mu_k[u]=dx.
\]

\subsection{Why almost-everywhere equality is insufficient}

The necessary condition $\mu_k[u]=dx$ detects singular Hessian mass
that is invisible in an almost-everywhere equation. For example, set
\begin{equation}\label{eq:weak-ae-example}
c=\binom{n}{k}^{-1/k},\qquad
w(x)=\frac c2|x|^2+|x_1|.
\end{equation}
This function is convex, has positive quadratic lower growth, and
satisfies $\sigma_k(D^2w)=1$ almost everywhere. It is not a quadratic
polynomial.

Indeed, on either half-space $\{x_1>0\}$ or $\{x_1<0\}$, the term
$|x_1|$ is affine. Thus $D^2w=cI$ there and
$\sigma_k(D^2w)=\binom nk c^k=1$.

Its Hessian measure contains a nonzero singular part:
\begin{equation}\label{eq:weak-ae-measure}
\mu_k[w]=dx+
2\binom{n-1}{k-1}c^{k-1}\,
\mathcal H^{n-1}\!\restriction_{\{x_1=0\}}.
\end{equation}
To compute it, let $\chi_\varepsilon$ be a one-dimensional mollification
of $|\cdot|$ and set $w_\varepsilon=c|x|^2/2+\chi_\varepsilon(x_1)$.
Then
\[
\begin{aligned}
D^2w_\varepsilon&=cI+\chi_\varepsilon''(x_1)e_1\otimes e_1,\\
\sigma_k(D^2w_\varepsilon)
&=1+\binom{n-1}{k-1}c^{k-1}\chi_\varepsilon''(x_1).
\end{aligned}
\]

Since $(|t|)''=2\delta_0$ in distributions,
$\chi_\varepsilon''(t)\,dt\rightharpoonup2\delta_0$.
Integration in the remaining coordinates gives the hypersurface measure
in \eqref{eq:weak-ae-measure}.

Weak continuity identifies this limit with $\mu_k[w]$.
Since the hypersurface term is nonzero, $\mu_k[w]\ne dx$.
Thus $w$ is not an admissible weak solution in Trudinger's sense.

\appendix
\titleformat{\section}{\normalfont\Large\bfseries}{Appendix~\thesection}{1em}{}
\section{Two algebraic lemmas}\label{sec:algebra}

We prove the two algebraic inequalities used to construct the distance barrier.
The first controls a quadratic symmetric polynomial under a reciprocal
constraint.

The second transfers this inequality to the fractional transformation
of the principal curvatures. Its conclusion gives the admissibility
of the distance power in Section~\ref{sec:distance}.

\begin{lemma}[A reciprocal inequality]\label{lem:quadratic}
Let $N\ge1$ and $\eta\in\R^N$ satisfy
\begin{equation}\label{eq:eta-hypotheses}
\eta_i>-1\quad(1\le i\le N),\qquad
\sum_{i=1}^N\frac{\eta_i}{1+\eta_i}\ge0.
\end{equation}
Then
\begin{equation}\label{eq:quadratic}
\sigma_2(\eta)+\left(1-\frac1{2N}\right)\sigma_1(\eta)\ge0.
\end{equation}
\end{lemma}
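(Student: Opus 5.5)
The plan is to reduce \eqref{eq:quadratic} to an inequality between a few symmetric quantities and then check an extremal configuration. Since $\sigma_2(\eta)=\tfrac12\bigl(\sigma_1(\eta)^2-|\eta|^2\bigr)$, the claim is equivalent to
\[
|\eta|^2\le\sigma_1(\eta)^2+\Bigl(2-\frac1N\Bigr)\sigma_1(\eta).
\]
The first step uses the identity $\eta_i-\frac{\eta_i}{1+\eta_i}=\frac{\eta_i^2}{1+\eta_i}$ together with \eqref{eq:eta-hypotheses}. Summing the identity and using the constraint gives
\[
\sigma_1(\eta)\ge\sum_i\frac{\eta_i^2}{1+\eta_i}\ge0 .
\]
So $\sigma_1\ge0$, and the trace term on the right is harmless. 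If every $\eta_i\ge0$, then all $\sigma_j(\eta)\ge0$ and the result is immediate. Hence the case to handle is when some entries lie in $(-1,0)$.

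Next, split the indices into $I_+=\{\eta_i\ge0\}$ and $I_-=\{\eta_i<0\}$. Write $P=\sum_{I_+}\eta_i$ and $Q=\sum_{I_-}|\eta_i|$, so that $\sigma_1=P-Q\ge0$. On $I_+$ I would use $\sum_{I_+}\eta_i^2\le P^2$. This reduces the target to
\[
\sum_{I_-}\eta_i^2+2PQ-Q^2\le\Bigl(2-\frac1N\Bigr)(P-Q).
\]
The constraint now reads
\[
\sum_{I_+}\frac{\eta_i}{1+\eta_i}\ge\sum_{I_-}\frac{|\eta_i|}{1-|\eta_i|}.
\]
This is the key restriction: a negative entry close to $-1$ forces very large positive entries, which in turn make $P-Q$ large. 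For fixed $P$, the left side of the constraint is largest when the positive mass is spread evenly. Concavity of $t\mapsto t/(1+t)$ therefore gives
\[
\sum_{I_+}\frac{\eta_i}{1+\eta_i}\le\frac{m_+P}{m_++P}\le\frac{(N-1)P}{N-1+P},
\]
with $m_+=|I_+|\le N-1$. Convexity of $s\mapsto s/(1-s)$ on the negative side gives
\[
\sum_{I_-}\frac{|\eta_i|}{1-|\eta_i|}\ge\frac{Q}{1-Q/m_-}\ge\frac{Q}{1-Q},
\]
the first bound by Jensen. These bounds should lead to an explicit lower bound $P\ge P_*(Q)$.

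The remaining step, and the one I expect to be the main obstacle, is the reduction to extremal configurations. The reductions above are in competing directions: $\sum_{I_+}\eta_i^2\le P^2$ is sharp when one positive entry carries all the mass, while the constraint is loosest when the mass is spread. So one cannot simply insert both crude bounds.

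What I would try first is a Lagrange-multiplier analysis of
\[
E(\eta)=\sigma_2(\eta)+\Bigl(1-\frac1{2N}\Bigr)\sigma_1(\eta)
\]
on the constraint surface $\sum\frac{\eta_i}{1+\eta_i}=0$; the inequality version of the constraint only helps after this, by monotonicity in a positive entry. At an interior critical point, the condition $\partial_iE=\lambda(1+\eta_i)^{-2}$ reads
\[
\sigma_1-\eta_i+1-\frac1{2N}=\frac{\lambda}{(1+\eta_i)^2}.
\]
The left side is affine in $\eta_i$, so each $\eta_i$ takes at most three values, and an ordering argument should cut this to two. This reduces the problem to $m$ entries equal to $x>0$ and $N-m$ entries equal to $-y\in(-1,0)$, linked by $\frac{mx}{1+x}=\frac{(N-m)y}{1-y}$. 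The result is a one-parameter polynomial inequality, to be checked for each $m$. I expect the worst case is $m=N-1$ positive entries and a single negative one, and that the constant $1-\frac1{2N}$ is exactly what makes it work there. Boundary behaviour ($\eta_i\to-1$, or entries tending to infinity) would be handled by the bound $\sigma_1\ge\sum_i\eta_i^2/(1+\eta_i)$ from the first step, which makes $E\to+\infty$ along such sequences.
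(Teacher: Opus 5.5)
Your reformulation $|\eta|^2\le\sigma_1(\eta)^2+(2-\frac1N)\sigma_1(\eta)$ is correct, and so is your first step $\sigma_1(\eta)\ge\sum_i\eta_i^2/(1+\eta_i)\ge0$; the paper uses both. What follows, however, is a programme rather than a proof. The decisive step is only announced and never carried out: the reduction to two-valued configurations, and the verification of the resulting one-parameter inequality for every $m$. That is where all the content of the lemma lies.

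The supporting claims also need repair.
\begin{itemize}
\item The feasible set is unbounded, and your coercivity argument does not follow. From $\sigma_1\to\infty$ you cannot conclude $\frac12(\sigma_1^2-|\eta|^2)+(1-\frac1{2N})\sigma_1\to\infty$, because bounding $|\eta|^2$ in terms of $\sigma_1$ is exactly what is to be proved. The case $\eta_i\to-1$ never arises, since $(1+\eta_i)^{-1}\le\sum_j(1+\eta_j)^{-1}\le N$.
\item Your guesses about the extremal case are off. Replace $1-\frac1{2N}$ in your $E$ by a constant $c$. Take $m=N-1$ entries $x=p/(1-p)$ and one entry $-y$, with $y=\frac{(N-1)p}{1+(N-1)p}$ fixed by the constraint. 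Then $E=N(N-1)(c-\frac12)p^2+O(p^3)$ as $p\to0$, which needs only $c>\frac12$. By contrast, take one entry $M\to\infty$ and $N-1$ entries $-y$, where the constraint forces $y\uparrow\frac1N$. Then $E=M(c-(N-1)y)+O(1)$, which forces $c\ge1-\frac1N$. So $1-\frac1{2N}$ is not sharp, and $m=N-1$ is not the worst case.
\item In the Jensen branch you discarded, the inequality $\frac{Q}{1-Q/m_-}\ge\frac{Q}{1-Q}$ is reversed when $m_-\ge2$ and $Q<1$.
\end{itemize}

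The idea that closes the gap is short and uses only your first step. Since $\eta_i^2=(1+\eta_i)\cdot\frac{\eta_i^2}{1+\eta_i}$ with $\frac{\eta_i^2}{1+\eta_i}\ge0$, it suffices to bound the weights uniformly: $1+\eta_i\le\sigma_1(\eta)+2-\frac1N$ for every $i$. For $N\ge2$ this goes as follows. The constraint is equivalent to $\sum_j(1+\eta_j)^{-1}\le N$. Cauchy--Schwarz over the $N-1$ indices $j\ne i$ then gives $\sum_{j\ne i}(1+\eta_j)\ge(N-1)^2/N$. Subtracting this from $\sum_j(1+\eta_j)=N+\sigma_1(\eta)$ gives the weight bound. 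Consequently
\[
|\eta|^2\le\Bigl(\sigma_1+2-\frac1N\Bigr)\sum_i\frac{\eta_i^2}{1+\eta_i}\le\Bigl(\sigma_1+2-\frac1N\Bigr)\sigma_1,
\]
where the second inequality uses $\sigma_1+2-\frac1N>0$. This is exactly your reformulated inequality. It is the paper's proof, and it needs no sign splitting, no Lagrange multipliers and no compactness. The case $N=1$ is immediate, since there the constraint gives $\eta_1\ge0$ and $\sigma_2=0$.
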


\begin{proof}
When $N=1$, the denominator $1+\eta_1$ is positive, so the assumption
gives $\eta_1\ge0$. Since $\sigma_2(\eta)=0$ in one variable, the
conclusion follows.

Assume $N\ge2$. We compare the sum of squares with the square of the
sum. Set $S=\sum_i\eta_i$ and $E=\sum_i\eta_i^2/(1+\eta_i)$.
The identities
\[
\frac{\eta_i}{1+\eta_i}=1-\frac1{1+\eta_i},\qquad
\frac{\eta_i^2}{1+\eta_i}=\eta_i-\frac{\eta_i}{1+\eta_i}
\]
convert the hypothesis into
\begin{equation}\label{eq:reciprocal}
\sum_i\frac1{1+\eta_i}\le N,\qquad
0\le E=S-\sum_i\frac{\eta_i}{1+\eta_i}\le S.
\end{equation}

In particular, $S\ge0$. Fix an index $i$ and apply Cauchy--Schwarz
to the $N-1$ positive numbers $1+\eta_j$, with $j\ne i$:
\[
(N-1)^2\le
\left(\sum_{j\ne i}(1+\eta_j)\right)
\left(\sum_{j\ne i}\frac1{1+\eta_j}\right).
\]
The second factor is at most $N$ by \eqref{eq:reciprocal}. Dividing
by this positive factor gives
\begin{equation}\label{eq:other-entries}
\sum_{j\ne i}(1+\eta_j)
\ge\frac{(N-1)^2}{\displaystyle\sum_{j\ne i}(1+\eta_j)^{-1}}
\ge\frac{(N-1)^2}{N}.
\end{equation}
Subtracting from $\sum_j(1+\eta_j)=N+S$, we find
$1+\eta_i\le S+2-N^{-1}$ for every $i$.

Multiply this bound by the nonnegative number
$\eta_i^2/(1+\eta_i)$ and sum over $i$.
Using $E\le S$ and $S+2-N^{-1}>0$, we obtain
\begin{equation}\label{eq:square-bound}
\sum_i\eta_i^2
\le\left(S+2-\frac1N\right)E
\le S^2+\left(2-\frac1N\right)S.
\end{equation}
Finally, expand $S^2$ into its diagonal and off-diagonal terms:
\[
2\sigma_2(\eta)=S^2-\sum_i\eta_i^2
\ge-\left(2-\frac1N\right)S.
\]
Dividing by two and using $S=\sigma_1(\eta)$ proves
\eqref{eq:quadratic}.
\end{proof}

\begin{lemma}[A fractional transformation of the symmetric cone]\label{lem:transform}
Let $2\le k\le n$, $a\in\G_{k-1}^{(n-1)}$, and $a_i<1$ for every $i$.
Define
\begin{equation}\label{eq:algebra-constants}
\mu_i=\frac{a_i}{1-a_i},\qquad N=n-k+1,\qquad
\theta=\frac2k\left(1-\frac1{2N}\right).
\end{equation}
Then $0<\theta<1$, $\mu\in\G_{k-1}^{(n-1)}$, and
\begin{equation}\label{eq:transform}
\sigma_k(\mu)+\theta\sigma_{k-1}(\mu)\ge0.
\end{equation}
In particular, $(\mu_1,\ldots,\mu_{n-1},\theta)\in\clG_k^{(n)}$.
\end{lemma}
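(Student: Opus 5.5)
The first two claims are quick. For $\theta$, we have $1\le 2-\tfrac1N<2$, so $0<\theta<\tfrac2k\le1$. For $\mu\in\G_{k-1}^{(n-1)}$, note that $a_i<1$ gives
\[
\mu_i-a_i=\frac{a_i^2}{1-a_i}\ge0 .
\]
Thus $\mu=a+p$ with $p\ge0$ componentwise. The cone $\G_{k-1}$ is preserved by adding nonnegative vectors, so $\mu\in\G_{k-1}^{(n-1)}$.

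For the inequality \eqref{eq:transform}, I plan to reduce to Lemma~\ref{lem:quadratic} by a combinatorial averaging over $(k-2)$-element index sets. For $I\subset\{1,\dots,n-1\}$ with $|I|=k-2$, write $\mu_I=\prod_{i\in I}\mu_i$ and let $\mu_{I^c}\in\R^N$ be the remaining entries; note that $|I^c|=n-1-(k-2)=N$. Counting how often each $k$-subset and each $(k-1)$-subset arises gives
\[
\sigma_k(\mu)=\binom k2^{-1}\sum_{I}\mu_I\sigma_2(\mu_{I^c}),\qquad
\sigma_{k-1}(\mu)=\frac1{k-1}\sum_I\mu_I\sigma_1(\mu_{I^c}).
\]
Since $\tfrac k2\theta=1-\tfrac1{2N}$, it follows that
\[
\sigma_k(\mu)+\theta\sigma_{k-1}(\mu)=\frac{2}{k(k-1)}\sum_I\mu_I\Bigl[\sigma_2(\mu_{I^c})+\Bigl(1-\frac1{2N}\Bigr)\sigma_1(\mu_{I^c})\Bigr].
\]
The bracket is exactly the quantity controlled by Lemma~\ref{lem:quadratic} with $\eta=\mu_{I^c}$. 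Its hypotheses hold for every $I$. First, $1+\mu_i=1/(1-a_i)>0$. Second, $\eta_i/(1+\eta_i)=a_i$, so the constraint becomes $\sum_{i\notin I}a_i\ge0$. This holds because deleting one entry maps $\G_j$ into $\G_{j-1}$ (the restriction identity \eqref{eq:restriction}, or its vector version). Deleting $k-2$ entries therefore maps $\G_{k-1}^{(n-1)}$ into $\G_1^{(N)}$, which forces $\sum_{i\notin I}a_i>0$. Hence every bracket is nonnegative.

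The main obstacle is the sign of the weights $\mu_I$: some entries of $\mu$ may be negative, so the averaging identity does not immediately give nonnegativity. I will avoid the full average and instead expand in variables one at a time, keeping $\theta$ as an extra entry. Order $\mu_1\ge\dots\ge\mu_{n-1}$. Since $\mu\in\G_{k-1}$ has at least $k-1$ positive entries, the indices in $I_0=\{1,\dots,k-2\}$ carry positive values. I will then prove the stronger statement $(\mu,\theta)\in\clG_k^{(n)}$ by induction on $k$. The base case $k=2$ is Lemma~\ref{lem:quadratic} directly, with $N=n-1$. For the induction step, use
\[
\sigma_j(\mu,\theta)=\mu_1\sigma_{j-1}(\mu',\theta)+\sigma_j(\mu',\theta),
\]
where $\mu'$ is $\mu$ with its largest entry removed. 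I will try to show that $(\mu',\theta)$ satisfies the inductive hypothesis with $k-1$ in place of $k$: here $a'\in\G_{k-2}$, and the dimension count $N'=(n-1)-(k-1)+1=N$ is unchanged, while $\theta$ changes with $k$.

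If that sign bookkeeping fails, I will fall back on a continuity argument in the spirit of Section~\ref{sec:branch}. Deform $a$ within the connected set $\{a\in\G_{k-1}:a_i<1\}$ from a point where all $a_i>0$; there the claim is trivial because $\mu>0$. Then show that $(\mu,\theta)$ cannot reach $\partial\G_k$ while $\sigma_k(\mu,\theta)$ stays nonnegative, using the averaged identity with positive weights near the boundary.
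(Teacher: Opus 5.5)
Your preliminary steps are correct: the range of $\theta$, the domination $\mu\ge a$ giving $\mu\in\G_{k-1}^{(n-1)}$, the averaging identity over $(k-2)$-sets $I$, and the check that each $\mu_{I^c}$ satisfies the hypotheses of Lemma~\ref{lem:quadratic}. The observation $\mu_i/(1+\mu_i)=a_i$ is exactly right. For $k=2$ this already finishes the proof, since $I=\emptyset$ carries weight one.

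For $k\ge3$, however, the sign problem you flag is real, and neither repair resolves it. In your induction, the inequalities $\sigma_j(\mu,\theta)>0$ for $j<k$ are immediate from $\mu\in\G_{k-1}$, so all the content sits in $j=k$. There, in $\sigma_k(\mu,\theta)=\mu_1\sigma_{k-1}(\mu',\theta)+\sigma_k(\mu',\theta)$, the last term is governed by no hypothesis (you only know $a'\in\G_{k-2}$), and it can be negative. For $n=4$, $k=3$, $a=(\tfrac12,\tfrac12,-\tfrac1{10})$ one has $\theta=\tfrac12$, $\mu=(1,1,-\tfrac1{11})$, and $\sigma_3(\mu',\theta)=\theta\sigma_2(\mu')=-\tfrac1{22}$. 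The inductive hypothesis also comes with the wrong parameter: it carries $\theta_{k-1}=\tfrac{k}{k-1}\theta_k>\theta_k$. Because $\sigma_{k-2}(\mu')>0$, nonnegativity of $\sigma_{k-1}(\mu')+\theta\sigma_{k-2}(\mu')$ at $\theta_{k-1}$ does not transfer to the smaller $\theta_k$. The continuity fallback has no mechanism behind it: nothing forces the weights $\mu_I$ to become positive near $\partial\G_k$.

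The missing idea is to apply Lemma~\ref{lem:quadratic} once, to a single vector $\eta\in\R^N$ whose first two symmetric functions are your weighted averages. With $P(s)=\prod_i(1+s\mu_i)$, your identity is the coefficient form of
\[
Q(s):=P^{(k-2)}(s)=(k-2)!\sum_{|I|=k-2}\mu_I\prod_{i\notin I}(1+s\mu_i).
\]
Since $P$ has only real roots, Rolle's theorem gives the same for $Q$, and $\deg Q\le N$. Hence $Q(s)/Q(0)=\prod_{j=1}^N(1+s\eta_j)$ with $\sigma_1(\eta)=(k-1)\sigma_{k-1}(\mu)/\sigma_{k-2}(\mu)$ and $\sigma_2(\eta)=\binom k2\sigma_k(\mu)/\sigma_{k-2}(\mu)$, so the signed weights disappear.

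The hypotheses on $\eta$ come from the paper's interpolation $\mu_i(s)=\mu_i/(1+s\mu_i)=a_i/(1-(1-s)a_i)$, $0\le s\le1$. This still dominates $a$ componentwise, by your own argument, so $Q=(k-2)!P\,\sigma_{k-2}(\mu(s))>0$ and $Q'=(k-1)!P\,\sigma_{k-1}(\mu(s))>0$ on $[0,1]$. Positivity of $Q$ there puts every root outside $[0,1]$, which gives $\eta_j>-1$. Positivity of $Q'$ gives $\sum_j\eta_j/(1+\eta_j)=Q'(1)/Q(1)>0$. Lemma~\ref{lem:quadratic} then yields \eqref{eq:transform} after multiplying by $2\sigma_{k-2}(\mu)/(k(k-1))$.
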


\begin{proof}
We connect the transformed vector $\mu$ to the original vector $a$
by introducing
\begin{equation}\label{eq:flow}
\mu_i(s)=\frac{\mu_i}{1+s\mu_i}
=\frac{a_i}{1-(1-s)a_i},\qquad 0\le s\le1.
\end{equation}
Here $\mu(0)=\mu$ and $\mu(1)=a$. The denominator in the second
expression is $(1-s)(1-a_i)+s$, which is positive for $0\le s\le1$.
Moreover,
\begin{equation}\label{eq:monotone-flow}
\mu_i(s)-a_i
=\frac{(1-s)a_i^2}{1-(1-s)a_i}\ge0.
\end{equation}
Thus $\mu(s)$ is obtained from $a\in\G_{k-1}$ by adding a vector
with nonnegative entries. Monotonicity of the cone gives
$\mu(s)\in\G_{k-1}$, including $\mu=\mu(0)$.

Now introduce the polynomials
\begin{equation}\label{eq:polynomials}
P(s)=\prod_{i=1}^{n-1}(1+s\mu_i),\qquad Q(s)=P^{(k-2)}(s).
\end{equation}
In a nonzero term of $P^{(j)}$, the derivatives fall on $j$ distinct
linear factors. Each index set $I$ with $j$ elements occurs $j!$ times;
dividing by $P(s)$ leaves $\prod_{i\in I}\mu_i/(1+s\mu_i)$.

Summing over all choices of $j$ indices therefore gives
\begin{equation}\label{eq:derivative-product}
P^{(j)}(s)=j!P(s)\sigma_j(\mu(s)),\qquad 0\le j\le k.
\end{equation}
Since $1+\mu_i=(1-a_i)^{-1}>0$, all factors of $P$ are positive on
$[0,1]$. The cone membership just proved gives
\[
Q=(k-2)!P\sigma_{k-2}(\mu(s))>0,\qquad
Q'=(k-1)!P\sigma_{k-1}(\mu(s))>0.
\]
For $k=2$, the first identity uses the convention $\sigma_0=1$.

The polynomial $P$ has only real roots. Repeated application of
Rolle's theorem, counting multiplicities, shows that $Q$ also has
only real roots.

Since $Q'>0$ on $[0,1]$, the polynomial $Q$ is nonconstant.
In particular, $\deg P\ge k-1$, so differentiation reduces its
degree by exactly $k-2$. Hence
\begin{equation}\label{eq:degree}
\deg Q=\deg P-(k-2)\le n-k+1=N.
\end{equation}
Let $r_1,\ldots,r_{\deg Q}$ be the roots of $Q$, counted with
multiplicity. Since $Q>0$ on $[0,1]$, each root lies in
$(-\infty,0)\cup(1,\infty)$.

For $\eta_j=-1/r_j$, a negative root gives $\eta_j>0$, while a root
greater than one gives $-1<\eta_j<0$.
Factoring $Q$ and normalizing its value at zero yields
\begin{equation}\label{eq:factorization}
\frac{Q(s)}{Q(0)}=\prod_{j=1}^N(1+s\eta_j),\qquad \eta_j>-1.
\end{equation}
If $\deg Q<N$, we add zero entries to $\eta$.

Logarithmic differentiation at $s=1$ yields
\begin{equation}\label{eq:log-derivative}
\sum_{j=1}^N\frac{\eta_j}{1+\eta_j}
=\frac{Q'(1)}{Q(1)}>0.
\end{equation}
Thus $\eta$ satisfies both hypotheses of Lemma~\ref{lem:quadratic}.
To express its conclusion in terms of $\mu$, differentiate
\eqref{eq:factorization} at zero:
\[
\sigma_1(\eta)=\frac{Q'(0)}{Q(0)},\qquad
\sigma_2(\eta)=\frac{Q''(0)}{2Q(0)}.
\]
Since $P(0)=1$, formula \eqref{eq:derivative-product} evaluates these
three derivatives. We obtain
\begin{equation}\label{eq:coefficients}
\begin{aligned}
\sigma_1(\eta)&=\frac{(k-1)\sigma_{k-1}(\mu)}{\sigma_{k-2}(\mu)},\\
\sigma_2(\eta)&=\frac{k(k-1)\sigma_k(\mu)}{2\sigma_{k-2}(\mu)}.
\end{aligned}
\end{equation}
The denominator $\sigma_{k-2}(\mu)$ is positive. Substitute these
identities into \eqref{eq:quadratic} and multiply by
$2\sigma_{k-2}(\mu)/(k(k-1))$ to obtain
\[
\sigma_k(\mu)+\frac2k\left(1-\frac1{2N}\right)
\sigma_{k-1}(\mu)\ge0.
\]
This is \eqref{eq:transform}. When $k=n$, the second identity in
\eqref{eq:coefficients} has both sides zero, since $\deg Q\le1$ and
$\sigma_n(\mu)=0$.

Since $\mu\in\G_{k-1}$ and $\theta>0$, for $1\le j<k$ we have
\begin{equation}\label{eq:lower-symmetric}
\sigma_j(\mu,\theta)=\sigma_j(\mu)+\theta\sigma_{j-1}(\mu)>0.
\end{equation}
For every $\eps>0$, \eqref{eq:transform} also gives
\begin{equation}\label{eq:closure}
\sigma_k(\mu,\theta+\eps)\ge\eps\sigma_{k-1}(\mu)>0.
\end{equation}
Thus $(\mu,\theta+\eps)\in\G_k$. Letting $\eps\downarrow0$ proves
the closure statement. The formula for $\theta$ gives $0<\theta<1$.
\end{proof}

\section*{Acknowledgements}
The research is assisted by ChatGPT 6.

\medskip
\noindent\textit{Jingchen Hu:}
\href{mailto:jingchenhoo@gmail.com}{jingchenhoo@gmail.com},
\href{mailto:jingchenhu@scu.edu.cn}{jingchenhu@scu.edu.cn}.
\par\noindent\textit{Heming Jiao:}
\href{mailto:jiao@hit.edu.cn}{jiao@hit.edu.cn}.
\end{document}